\documentclass[11pt]{article}

\usepackage[utf8]{inputenc}
\usepackage{scalerel}
\usepackage{enumitem}
\usepackage{authblk}
\usepackage{wrapfig}
\usepackage{tikz,tikz-cd}
\usepackage{amsmath,amscd,amssymb}
\usepackage{amsthm}
\usepackage{amsfonts}
\usepackage{mathtools}
\usepackage{color}
\usepackage[mathscr]{eucal}
\usepackage[margin=1in]{geometry}
\usepackage[displaymath]{lineno}
\usepackage{subcaption}
\usepackage{overpic}
\usepackage{url}
\usepackage{extarrows}
\usepackage{booktabs} 

\setlist[enumerate]{itemsep=-1mm}

\usepackage[multiple]{footmisc} % Multiple footnotes at one point
 \newtheorem{theorem}{Theorem}[section]
 \newtheorem{proposition}[theorem]{Proposition}
 
 \newtheorem{lemma}[theorem]{Lemma}

 \theoremstyle{definition}
 \newtheorem{definition}[theorem]{Definition}
 \newtheorem{example}[theorem]{Example}
 \newtheorem{remark}[theorem]{Remark}
 
 \newtheorem{algorithm}[theorem]{Algorithm}

\newcommand{\real}{\ensuremath{\mathbb{R}}}
\newcommand{\orient}[1]{\ensuremath{\hat{#1}}}
\newcommand{\inner}[2]{\langle #1,#2 \rangle}

\newcommand{\innervf}[2]{\langle\!\langle #1,#2 \rangle\!\rangle}

\newcommand{\vf}{\ensuremath{\mathfrak{X}}}
\newcommand{\borel}[1]{\ensuremath{\mathcal{P}(#1)}}
\newcommand{\Ric}{\ensuremath{\mathrm{Ric}}}

\begin{document}

%\linenumbers

\title{Covariance and Principal Component Analysis on Riemannian Manifolds and Graphs}
\author[1]{Luiz Hartmann}
\author[2]{Wenwen Li}
\author[2]{Washington Mio}
\affil[1]{Departamento de Matem\'{a}tica, Universidade Federal de S\~{a}o Carlos, Brazil}
\affil[2]{Department of Mathematics, Florida State University,  USA}

\date{ }
\maketitle

\begin{abstract}
We develop notions of covariance and principal component
analysis (PCA) for probability measures on Riemannian manifolds of bounded geometry and a discrete counterpart for distributions on the vertex sets of weighted simple graphs. Rather than anchoring variation at the Fr\'{e}chet mean, whose uniqueness and usefulness can fail in this context, we consider variation about every point. This leads to a representation of each point on the manifold by a vector field derived from the heat kernel and thus to a map of the underlying manifold into a Hilbert space of vector fields, in which covariance and PCA carry over from the Euclidean setting. A distinguishing feature of this formulation is that one typically obtains infinitely many principal components, capable of capturing highly nonlinear geometric features and patterns of variation. We prove an embedding theorem for the mapping into the space of vector fields and develop two computational reductions of the theory: RiePCA, a finite-dimensional reduction based on vector fields supported on finitely many points, and GraphPCA, a discrete formulation for measures on weighted graphs in which vector fields assign orientations and magnitudes to edges. Numerical examples and experiments illustrate the behavior of the proposed methods on both synthetic and real data.

\bigskip
\noindent
{\em Keywords:} Covariance on manifolds, covariance on graphs, principal component analysis, principal vector fields, dimension reduction, manifold learning.

\medskip
\noindent
{\em 2020 Mathematics Subject Classification:} 62R30, 62H25, 62R07
\end{abstract}

\section{Introduction} \label{S:intro}

The mean and the covariance tensor of a probability measure on Euclidean space $\mathbb{R}^d$ are basic summary statistics underpinning widely used techniques for data visualization, analysis, and inference such as Principal Component Analysis (PCA). This paper develops the concepts of covariance tensor and PCA for probability distributions on Riemannian manifolds and weighted simple graphs. 

Let $(M,g)$ be a $d$-dimensional Riemannian manifold. Here, $g$ denotes the Riemannian structure, a smoothly varying family of inner products $g_x$ on each tangent space $T_x M$ for $x \in M$.  As detailed below, in our approach, each point $y \in M$ is represented by a vector field $v_y$ on $M$ and covariance is formulated as a 2-tensor on a Hilbert space $\vf_2(M)$ of vector fields, an infinite-dimensional space if $M$ has dimension $n \geq 1$. The resulting principal modes of variation are thus given by principal vector fields. Unlike standard Euclidean PCA, typically there are infinitely many mutually orthogonal principal components that can uncover highly nonlinear geometric features in data. 

PCA for a probability measure $\nu$ on $\mathbb{R}^d$ (with finite second moments) is founded on the covariance tensor:
\begin{equation} \label{E:cov1}
\Sigma = \int_{\mathbb{R}^d} (y-x_0) \otimes (y-x_0) \, d\nu(y),
\end{equation}
where $x_0 \in \real^d$ denotes the mean of $\nu$. Although there has been considerable interest in defining covariance and extending PCA to distributions on Riemannian manifolds, significant challenges remain related to the non-uniqueness of means and the lack of a global linear structure on $M$. Moreover, a fundamental limitation of standard Euclidean PCA is that, despite the infinite-dimensional nature of the space of probability distributions on $\mathbb{R}^d$, the method yields at most $d$ components. Typically, the number of structurally meaningful components is much smaller ($\ll d$). To overcome these limitations, an infinite-dimensional representation offers a more natural and powerful framework. While Kernel PCA \cite{Scholkopf1998} represents a step toward addressing these issues, it often involves a designer kernel function that can be extrinsic to the geometry of the underlying space. Although ultimately our approach can be viewed as a particular form of Kernel PCA, it directly addresses the absence of global linearity, the lack of natural reference points such as means, and infinite dimensionality. Moreover, the embedding of $M$ into the space of vector fields is explicit and derived from the Laplace-Beltrami operator, which is intrinsic to $(M,g)$.

The notion of mean of a distribution can be extended to Riemannian manifolds and other metric spaces through the Fr\'{e}chet mean (cf.\,\cite{bhattacharya2003large, pennec2006intrinsic}). Nonetheless, the Fr\'{e}chet mean may be far from unique, and even when uniqueness holds, it is in general unclear how useful the mean is as a reference point for measuring variation; for example, if the distribution is multimodal or has highly non-linear support. Another obstacle to extending covariance tensors to manifolds is the seemingly essential use of the vector space structure of $\real^n$ in the random ``displacement from the mean'' vector $(y-x_0)$ that appears on the integrand in \eqref{E:cov1}. On a Riemannian manifold, only local linearity is available at each $x \in M$ through the tangent space $T_x M$. To circumvent this problem, we reinterpret the random vector $(y-x_0)$ as follows.  Consider the function $u(x,y) = \|y-x\|^2/2$ that we may think of as the potential energy of $y$ relative to $x$. Then, $-\nabla_x u (x,y) = y-x$ so that we can rewrite \eqref{E:cov1} as
\begin{equation} \label{E:cov3}
\Sigma = \int_{\real^d} \nabla_x u(x_0,y) \otimes  \nabla_x u(x_0,y) \, d\nu(y),
\end{equation}
an expression that only invokes local linearity and more easily generalizes to the Riemannian setting. To express \eqref{E:cov3} in the language of vector fields, for each $y \in M$, define the vector field $v_y (x) \coloneqq \nabla_x u(x,y)= x-y$. Then $v_y(x_0) = x_0-y$ and the mean vector field
\begin{equation}
\bar{v} (x) = \int_M v_y (x) \, d\nu(y) = x-x_0   
\end{equation}
has the property that $\bar{v} (x_0) =0$. Thus, we can write the covariance tensor in \eqref{E:cov3} as
\begin{equation} \label{E:cov4}
\Sigma = \int_{\real^d} (v_y (x_0) - \bar{v} (x_0)) \otimes  
(v_y (x_0) - \bar{v} (x_0))\, d\nu(y),
\end{equation}
an expression of the covariance in terms of vector fields evaluated at the mean $x_0$. Since on a Riemannian manifold there is generally no natural choice of a reference point such as $x_0$, we formulate covariance using the full vector-field analogs of $v_y$. 

The potential function $u(x,y) = d^2_g (x,y)/2$, where $d_g$ is the geodesic distance in $(M,g)$, may seem like a good candidate for the formulation of Riemannian covariance. However, this $u$ has some of the same limitations as the Euclidean potential, for example, in its inability to capture multi-modality. Additionally, the derivative $\nabla_x u(x,y)$ may not exist if $x$ falls in the cut locus of $y$. This motivates a multi-scale approach based on the heat kernel $k_t \colon M \times M \to \real$, $t>0$, a smooth potential that produces a windowing effect and is thus sensitive to structural features at different scales. At scale $t>0$, points on $M$ are represented as vector fields through the embedding $\Phi_t \colon M \to \vf_2 (M)$ given by $\Phi_t(y)= v_y$, where $v_y$ is the vector field
\begin{equation}
v_y (x)\coloneqq \nabla_x k_t (x,y).    
\end{equation}
In this formulation, the covariance tensor of a probability measure $\mu$ is defined as
\begin{equation} \label{E:covintro}
\Sigma_\mu \coloneqq \int_M (v_y-\bar{v}) \otimes (v_y-\bar{v}) \,d\mu (y),    
\end{equation}
where $\bar{v}$ is the mean vector field $\bar{v} = \int_M v_y \,d\mu (y)$. The covariance operator associated with $\Sigma_\mu$ is a self-adjoint, positive semi-definite, trace-class operator so that PCA in this setting can be formulated as in the Euclidean case. We note that for a fixed $y \in M$, unlike the Euclidean potential $u(x,y)= \|x-y\|^2/2$, which forms an energy well about $y$, the heat kernel is an energy hill, so it might seem more natural to work with the negative gradient field. However, as a negative sign has no effect on the covariance as defined in \eqref{E:covintro}, we work directly with the heat kernel.

The formulation of manifold covariance and PCA discussed above serves as the basis for two discrete models.  The covariance tensor defined in \eqref{E:cov4} is designed to account for covariation of a distribution $\mu$ about all points $x \in M$ because, in general, there may be no particularly salient reference points. In some situations, however, there is a set of prominent points such as finitely many isolated local maxima of the heat-kernel density estimator for the distribution $\mu$. In such scenarios, we introduce a variant of $\Sigma_\mu$ that is based on vector fields supported on a fixed finite set of anchor points. Targeting applications, this model is developed for potential functions more general than the heat kernel. We refer to this finite-dimensional version of Riemannian PCA as RiePCA. 

We also develop a discrete formulation of covariance and PCA, termed GraphPCA, for probability measures supported on the vertex set of a weighted finite simple graph. In this setting, a vector field is modeled as an assignment of an orientation and magnitude to each edge of the graph. This graph model is also useful in the analysis of manifold data if the ground manifold $M$ is high-dimensional (or even infinite-dimensional), as discretizing a vector field over a fine, dense grid is not feasible. One can instead construct a graph (such as a nearest-neighbors graph) from the data and uncover modes of variation intrinsic to the dataset. From a computational perspective, an important feature is that, so long as the graph is sparse, computations can be performed efficiently.

Several approaches have been proposed for covariance and PCA on Riemannian manifolds. Principal Geodesic Analysis (PGA)~\cite{Fletcher2004} is a pioneering method that adapts Euclidean PCA to a Riemannian manifold $M$. It projects data from $M$ onto the tangent space at the Fr\'echet mean (if uniquely defined) via the Riemannian logarithm map, where standard PCA is performed. The resulting principal components are then interpreted on the manifold $M$ via the exponential map. A probabilistic framework for principal geodesic analysis has been developed in~\cite{Zhang2013}. Geodesic PCA (GPCA)~\cite{Huckemann2010,Huckemann2006} offers a more intrinsic approach that bypasses tangent-space linearization, directly optimizing data projection onto geodesics to minimize reconstruction errors. A formulation of covariance that uses potential functions and measures covariation about all points of $M$ has been outlined in \cite{hang2018}; however, a key difference from the present method is that covariance is defined pointwise and assembled as a covariance field, not as a single global tensor at the level of vector fields. The approach developed in \cite{AbuqraisPigoli2026} also accounts for correlations about points other than the Fr\'{e}chet mean using tangent-space linearization via the Riemannian logarithm. Methods such as principal nested  spheres~\cite{Jung2012} employ a form of backward PCA that applies to data on spheres, whereas the barycentric subspaces method of~\cite{Pennec2018} applies analogous principles to more general Riemannian manifolds. For probability distributions on Euclidean space, a multi-scale model has been developed in \cite{diazmartinez2020}, but covariance is also defined pointwise, not as a single global tensor, making it difficult to formulate global PCA.

For data with a graph structure, other approaches to covariance and PCA have been proposed. Saerens et al.~introduced PCA on graphs based on Euclidean commute-time distance (ECTD). Their construction can be interpreted as (unweighted) kernel PCA with the pseudoinverse of the graph Laplacian as its kernel~\cite{saerens2004principal}. In contrast, our method incorporates a probability measure $\mu$ on the vertex set. Notions of variance and covariance for distributions on the vertex set of a weighted graph are also defined in~\cite{devriendt2022variance}. However, covariance is defined for a pair of distributions on a graph to measure how two network distributions or processes co-vary with respect to network geometry and topology, in sharp contrast with our approach that is more in line with the usual notion of covariance of a distribution and leads up to GraphPCA that uncovers the principal modes of variation of a single distribution. Observable Component Analysis is a form of PCA on metric spaces developed in \cite{karacam2025} that is based on covariation of 1-Lipschitz reductions to the real line. Some key features that distinguish the approach based on observables from the methods of this paper are that our model is more intrinsic to the underlying manifold or graph, and more amenable to computation.

\paragraph{Organization.} Section \ref{S:prelim} reviews some basic properties of the heat kernel and Section \ref{S:cvt} introduces and discusses the properties of the Riemannian covariance tensor. PCA on manifolds is developed in Section \ref{S:rpca} and Section \ref{S:gfembed} proves an embedding theorem for closed manifolds into the Hilbert space of $L_2$ vector fields. A finite-dimensional reduction of the Riemannian model, termed RiePCA, is presented in Section \ref{S:findim}. Graph covariance and GraphPCA are the main themes of Section \ref{S:graphpca}, whereas Section \ref{S:numerics} presents the results of some numerical experiments. We close the paper with a summary and discussion in Section \ref{S:summary}. An embedding theorem for non-compact manifolds of bounded geometry is proven in the Appendix.

%-----------------------

\section{Preliminaries} \label{S:prelim}

Let $(M,g)$ be a $d$-dimensional Riemannian manifold, $d \geq 1$. We denote the {\em geodesic distance} on $M$ by $d_g$ and the {\em volume measure} by $vol$. We write $TM$ for the tangent bundle of $M$, $T_x M$ for the tangent space at $x \in M$, $\inner{\ }{\, }_x$ for the inner product on $T_x M$ given by the Riemannian structure, and $\|\cdot\|_x$ for the norm associated with $\inner{\ }{\, }_x$, that is, $\|h\|_x^2 = \inner{h}{h}_x$, for any $h \in T_x M$. 

A {\em vector field} on $M$ is a section of the tangent bundle $TM$; in other words, a mapping $v \colon M \to TM$ such that $v(x) \in T_x M$, for any $x \in M$. We denote by $\vf_2 (M)$ the vector space of all vector fields satisfying $\int_M \|v(x)\|^2_x dvol (x) < \infty$, after identifying vector fields that only differ on a set of volume zero. $\vf_2 (M)$ is a separable Hilbert space with the inner product
\begin{equation} \label{E:innervf}
\innervf{v}{w} \coloneqq \int_M \inner{v(x)}{w(x)}_x dvol (x).
\end{equation}
We also view the tensor product $\vf_2(M) \otimes \vf_2(M)$ as a Hilbert space with the inner product given on pure tensors by
\begin{equation}
\begin{split}
\innervf{v_1 \otimes v_2}{w_1 \otimes w_2}_\otimes 
&\coloneqq \innervf{v_1}{w_1} \cdot \innervf{v_2}{w_2} \\
&\,=\int_M \inner{v_1(x)}{w_1(x)}_x dvol (x) \cdot \int_M \inner{v_2(x)}{w_2(x)}_x dvol (x),
\end{split}
\end{equation}
extended by linearity to their linear span and via limits to the completion. The corresponding norm is denoted $\|\cdot\|_\otimes$.

Unless stated otherwise, we assume throughout that $M$ is a connected, complete $d$-dimensional Riemannian manifold of bounded geometry, $d \geq 1$. Bounded geometry means that the curvature of $M$ and its covariant derivatives of all orders are uniformly bounded and the (global) injectivity radius $inj(M)$ is strictly positive. More precisely, 
\begin{enumerate}[label=\rm{(\roman*)}]
\item for each integer $k\geq 0$, there exists a constant $C_k>0$ such that $\|\nabla^k R\| \leq C_k$, where $R$ denotes the Riemannian curvature tensor of $M$;
\item there exists $\imath_0 >0$ such that $inj (M) \geq \imath_0$.
\end{enumerate}
Manifolds of bounded geometry include many of the manifolds of interest in data analysis such as compact Riemannian manifolds, Euclidean spaces, and hyperbolic spaces. The bounds on curvature imply that the Ricci curvature of $M$, denoted $\Ric_M$, is bounded below, a fact that is used repeatedly in our study of Riemannian covariance.

Next, we review some basic facts about the heat kernel. The {\em heat equation} on $M$ is given by
\begin{equation} \label{E:hequation}
\frac{\partial u}{\partial t} = \Delta u,
\end{equation}
where $\Delta$ is the Laplace-Beltrami operator $\Delta= \text{div} \circ \nabla$. Given a continuous function $f \colon M \to \real$ with compact support, the Cauchy problem consists of finding $u \colon M \times [0,\infty) \to \real$ that satisfies \eqref{E:hequation} on $M \times (0,\infty)$ and $u(x,0) = f(x)$, $\forall x \in M$. The {\em heat semigroup} $e^{t \Delta}$, $t>0$, is a family of integral operators that generate the solutions $e^{t\Delta} f$ to the Cauchy problem. The {\em heat kernel} is a smooth function $k \colon M \times M \times (0,\infty) \to \real$ such that
\begin{equation}
e^{t\Delta} f (x) = \int k_t (x,y) f(y) dvol(y),  
\end{equation}
where $k_t (x,y) = k(x,y,t)$. The heat kernel satisfies the following properties:
%----------------
\begin{enumerate}[label=\textup{(\roman*)}]
\item (positivity) $k_t (x,y)>0$, for all $x,y \in M$ and $t>0$ since $M$ is connected;
\item (symmetry) $k_t (x,y)=k_t(y,x)$, $\forall x,y \in M$ and $\forall t>0$;
\item $\int_M k_t (x,y) dvol(x)=1$, for all $y\in M$ and $t>0$ because $M$ has bounded geometry;
\item (semigroup property) $k_{t+s} (x,y) = \int_M k_t(x,z) k_s(z,y) dvol (z)$, for all $x,y \in M$ and $s,t>0$;
\item if $f\colon M \to \real$ is a smooth function with compact support, then
\begin{equation}
\int_M k_t (\cdot,y) f(y) dvol (y) \to f(\cdot),
\end{equation}
as $t \to 0^+$, where the convergence is in $C^\infty (M)$.
\end{enumerate}

\begin{example} \label{EX:rd}
In Euclidean space $\real^d$, $d \geq 1$, the heat kernel is given by
\begin{equation}
k_t(x,y)=\frac{1}{(4\pi t)^{\frac{d}{2}}} \exp
\left(-\frac{\|x-y\|^2}{4t}\right),
\end{equation}  
the Gaussian kernel with variance $\sigma_t^2 = 2t$.
\end{example}

If $M$ is a closed manifold, then $L_2(M)$ (with respect to the volume measure) admits a complete orthonormal set $\{\phi_i\colon M \to \real \colon i \geq 0\}$ of eigenfunctions of $\Delta$ such that $\Delta \phi_i = -\lambda_i \phi_i$  with $\lambda_i \geq 0$. We always assume that the eigenfunctions have been indexed so that the sequence $\lambda_i$, $i \geq 0$, is non-decreasing. The sequence of eigenvalues satisfies
\begin{equation}
\lim_{i \to \infty} \lambda_i = \infty.
\end{equation}
Since $M$ is connected, $\lambda_0 =0$ is the only trivial eigenvalue of $\Delta$. Moreover, the heat kernel admits the spectral decomposition
\begin{equation}
k_t (x,y) = \sum_{i=0}^\infty e^{-\lambda_i t} \phi_i(x) \phi_i (y).    
\end{equation}

%----------------------

\section{Riemannian Covariance} \label{S:cvt}

We denote by $\borel{M}$ the collection of all Borel probability measures on $M$.

\begin{definition} \label{D:cvt}
Let $\mu \in \borel{M}$ and $t>0$. For each $y \in M$, let $v_y$ be the vector field given by 
\[
v_y (x) = \nabla_x k_t(x,y)
\]
and $\bar{v}=\int_{M}  v_y\, d\mu(y)$ be the mean vector field (with respect to $\mu$). The {\em covariance tensor} $\Sigma_\mu \in \vf_2(M) \otimes \vf_2(M)$ at scale $t>0$ is defined as
\[
\Sigma_\mu \coloneqq \int_M (v_y-\bar{v}) \otimes (v_y-\bar{v}) \,d\mu (y),
\]
the expected value of the random 2-tensor $(v_y-\bar{v}) \otimes (v_y-\bar{v})$. (The vector fields $v_y$ and the covariance tensor $\Sigma_\mu$ depend on the parameter $t>0$ but we omit this dependence from the notation as $t$ is fixed throughout.)
\end{definition}

 Note that, in this formulation, the covariance tensor is defined for all $\mu \in \borel{M}$, without the requirement of $\mu$ having finite second moments. We elaborate on this point in Section \ref{S:rpca}. We now verify that $\Sigma_\mu \in \vf_2(M) \otimes \vf_2(M)$. Since $M$ has bounded geometry, there exists $K\geq 0$ such that the Ricci curvature satisfies $\Ric_M \geq -K (d-1)$, where $d \geq 1$ is the dimension of $M$. Analogous to the Li-Yau estimates for the heat kernel \cite{LY}, Davies \cite{DAV1} showed that the gradient of $k_t$ satisfies
\begin{equation}\label{E:gradhk}
	\|\nabla_x k_t(x,y)\|_x \leq C_1(d,t,K)\cdot e^{-\frac{d_g^2(x,y)}{c_t}},
\end{equation}
where $C_1(d,t,K)>0$ and $c_t>0$ are constants independent of $x$ and $y$. Estimate \eqref{E:gradhk} ensures that
\begin{equation} \label{E:est2}
\|v_y\|^2 = \int_M \|\nabla_x k_t(x,y)\|^2_x \,dvol(x) \leq C_1^2(t,d,K) \int_M e^{-\frac{d_g^2 (x,y)}{c_t}} dvol(x),
\end{equation}
$\forall y \in M$. By Lemma \ref{L:convergence}, there is a constant $C>0$ such that that $\int_M e^{-\frac{d_g^2 (x,y)}{c_t}} dvol(x)<C$, for all $y \in M$. Therefore, 
\begin{equation} 
\|v_y\|^2 = \int_M \|\nabla_x k_t(x,y)\|^2_x \,dvol(x) \leq C \cdot C_1^2(t,d,K) \eqqcolon D(d,t,K).
\end{equation}
As such, the mean field $\bar{v}$ also satisfies
\begin{equation} \label{E:est3}
\|\bar{v}\|^2 =\|\int_M v_y  \,d\mu (y)\|^2
\leq \int_M \|v_y\|^2  d\mu (y) \leq D(t,d,K).
\end{equation}
By \eqref{E:est2} and \eqref{E:est3},
\begin{equation} \label{E:bound}
\|v_y-\bar{v}\|^2 \leq \|v_y\|^2 + 2\|v_y\|\|\bar{v}\| + \|\bar{v}\|^2
\leq 2\|v_y\|^2 + 2\|\bar{v}\|^2  \leq 4D(t,d,K).
\end{equation}
This implies that
\begin{equation} \label{E:est}
\|\Sigma_\mu\|_\otimes  \leq \int_M \|(v_y-\bar{v}) \otimes (v_y-\bar{v})\|_\otimes \,d\mu (y)
= \int_M \|(v_y-\bar{v})\|^2  d\mu (y) \leq 4D(t,d,K),
\end{equation}
showing that $\Sigma_\mu \in \vf_2(M) \otimes \vf_2(M)$. 

\begin{definition} \label{D:cvlinear}
For a probability measure $\mu \in \borel{M}$, abusing notation, define the {\em covariance operator} $\Sigma_\mu \colon \vf_2(M) \to \vf_2(M)$ by
\[
\Sigma_\mu (w) \coloneqq \int_M (v_y-\bar{v}) \innervf{v_y-\bar{v}}{w} \,d\mu(y).
\]
\end{definition}

$\Sigma_\mu \colon \vf_2(M) \to \vf_2(M)$ is a well-defined bounded operator because, by \eqref{E:bound},
\begin{equation} \label{E:est}
\|\Sigma_\mu w\|\leq \|w\| \int_M \|v_y-\bar{v}\|^2 d\mu (y) \leq 4 D(t,d,K) \|w\|.
\end{equation}

\begin{proposition} \label{P:tclass}
The covariance $\Sigma_\mu \colon \vf_2(M) \to \vf_2(M)$ is a positive semi-definite, self-adjoint, trace-class operator. 
\end{proposition}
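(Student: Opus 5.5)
We verify the three properties directly from Definition \ref{D:cvlinear}, using the uniform bound \eqref{E:bound}, $\|v_y-\bar{v}\|^2 \leq 4D(t,d,K)$ for all $y\in M$. Write $u_y \coloneqq v_y-\bar{v}$.

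\emph{Measurability.} We first record that $y \mapsto u_y$ is a Bochner-integrable map into $\vf_2(M)$. Continuity of $y\mapsto v_y$ in $\vf_2(M)$ follows from smoothness of $k_t$ together with the Gaussian gradient bound \eqref{E:gradhk}, via dominated convergence; this bound is uniform in $y$ near any fixed point. Since $u_y$ is also bounded in norm, all integrals below are well defined. Moreover, inner products may be passed inside the integral: for $w,w'\in\vf_2(M)$,
\[
\innervf{\Sigma_\mu w}{w'} = \int_M \innervf{u_y}{w}\,\innervf{u_y}{w'}\,d\mu(y).
\]

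\emph{Self-adjointness and positivity.} The right-hand side of the displayed identity is symmetric in $w$ and $w'$, so $\innervf{\Sigma_\mu w}{w'}=\innervf{w}{\Sigma_\mu w'}$. Since $\Sigma_\mu$ is bounded by \eqref{E:est}, it is self-adjoint. Setting $w'=w$ gives $\innervf{\Sigma_\mu w}{w} = \int_M \innervf{u_y}{w}^2\,d\mu(y)\geq 0$, so $\Sigma_\mu$ is positive semi-definite.

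\emph{Trace class.} For a positive self-adjoint operator it suffices to show $\sum_i \innervf{\Sigma_\mu e_i}{e_i}<\infty$ for some orthonormal basis $\{e_i\}$ of the separable space $\vf_2(M)$. By the identity above and Tonelli's theorem (all terms are nonnegative),
\[
\sum_i \innervf{\Sigma_\mu e_i}{e_i} = \int_M \sum_i \innervf{u_y}{e_i}^2\,d\mu(y) = \int_M \|u_y\|^2\,d\mu(y) \leq 4D(t,d,K),
\]
where the middle equality is Parseval's identity. Hence $\Sigma_\mu$ is trace class, with trace equal to $\int_M\|u_y\|^2\,d\mu(y)$.

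An alternative route is to note that each rank-one operator $w\mapsto u_y\innervf{u_y}{w}$ has trace norm $\|u_y\|^2$. Then $\Sigma_\mu$ is a Bochner integral in the Banach space of trace-class operators, whose trace norm is at most $4D(t,d,K)$.

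The only delicate step is measurability of $y\mapsto v_y$, which is needed to justify the Bochner integral and the interchange of inner products with integration. We handle it through continuity of $y \mapsto \nabla_x k_t(x,y)$ and domination by $C_1 e^{-d_g^2(x,y)/c_t}$ locally uniformly in $y$, combined with Lemma \ref{L:convergence}. Everything else is routine Hilbert-space bookkeeping.
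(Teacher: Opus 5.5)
Your proof is correct and follows essentially the same route as the paper: positivity and self-adjointness come from the symmetric form $\int_M \innervf{v_y-\bar{v}}{w}\,\innervf{v_y-\bar{v}}{w'}\,d\mu(y)$, and the trace-class property comes from Parseval's identity with the sum moved inside the integral, bounded by $4D(t,d,K)$. Your extra justification fills in details the paper leaves implicit, namely measurability via continuity of $y\mapsto v_y$ in $\vf_2(M)$ by dominated convergence and the explicit use of Tonelli; your remark on the trace-norm Bochner integral is a valid alternative.
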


\begin{proof}
For any $v \in \vf_2(M)$,  $\innervf{v}{\Sigma_\mu v} = \int_M \innervf{v_y-\bar{v}}{v}^2 d\mu(y) \geq 0$, showing that $\Sigma_\mu \geq 0$. For $v,w \in \vf_2(M)$, we have
\begin{equation} 
\innervf{w}{\Sigma_\mu v}  = \int_M \innervf{v_y-\bar{v}}{w} \innervf{v_y-\bar{v}}{v} d\mu(y) = \innervf{\Sigma_\mu w}{v},
\end{equation}
so that $\Sigma_\mu$ is self-adjoint. To verify the trace class property, let $\{v_i\}_{i=1}^\infty$ be an orthonormal basis of $\vf_2(M)$. As $\Sigma_\mu \geq 0$, it suffices to check that
\begin{equation}
\sum_{i=1}^\infty \innervf{v_i}{\Sigma_\mu v_i} < \infty.
\end{equation}

Since $\Sigma_\mu v_i = \int_M (v_y-\bar{v}) \innervf{v_y-\bar{v}}{v_i} d\mu (y)$, we have that
\begin{equation} \label{E:trace}
\begin{aligned}
\sum_{i=1}^\infty \innervf{v_i}{\Sigma_\mu v_i} &= \sum_{i=1}^\infty \int_M \innervf{v_y-\bar{v}}{v_i}^2 d\mu(y)
= \int_M \Big(\sum_{i=1}^\infty \innervf{v_y-\bar{v}}{v_i}^2 \Big)d\mu(y) \\
&= \int_M \|v_y-\bar{v}\|^2 d\mu(y) \leq 4 D(t,d,K),
\end{aligned}
\end{equation}
where we have used Parseval's identity and \eqref{E:bound}. This completes the proof.
\end{proof}

%------------------------

\section{PCA on Manifolds} \label{S:rpca}

Section \ref{S:cvt} shows that, for each $t>0$, $\Sigma_\mu \colon \vf_2(M) \to \vf_2(M)$ is a bounded, positive semi-definite, self-adjoint, trace-class operator. This implies that $\vf_2(M)$ admits a complete orthonormal set $\{e_i\}_{i=1}^\infty$ of eigenvectors of $\Sigma_\mu$. The corresponding eigenvalues are non-negative because $\Sigma_\mu \geq 0$ and their only accumulation point is zero. We denote the eigenvalues $\sigma_i^2$ and assume that they are indexed so that the sequence $\{\sigma_i^2\}$ is non-increasing. Thus, $\Sigma_\mu e_i = \sigma_i^2 e_i$ and $\sigma_i^2 \geq \sigma_{i+1}^2$, for any $i \geq 1$. Moreover, the trace of $\Sigma_\mu$ satisfies
\begin{equation} \label{E:trace2}
Tr(\Sigma_\mu) = \sum_{i=1}^\infty \innervf{e_i}{\Sigma_\mu e_i} =  \sum_{i=1}^\infty \sigma_i^2 < \infty.
\end{equation}
If $\sigma_i \ne 0$, we refer to the eigenvector field $e_i$ as the {\em $i$th principal vector field} of $\mu$ (at scale $t>0$). If the multiplicity of the non-zero eigenvalue $\sigma_i^2$ is one (generically, this is the case), the principal vector field $e_i$ is uniquely defined up to sign. The $i$th {\em PC-score} of a point $y \in M$ is given by
\begin{equation}
PC_i(y) \coloneqq \innervf{v_y-\bar{v}}{e_i}.   
\end{equation}

\begin{definition}
The {\em variance} of $\mu$, at scale $t>0$, is defined as $\sigma^2 \coloneqq Tr (\Sigma_\mu)$ and the {\em $i$th principal variance} as $\sigma_i^2$. 
\end{definition}

Note that from \eqref{E:trace} and \eqref{E:trace2} we can conclude that $\sigma^2 = Tr(\Sigma_\mu) \leq 4 D(t,d,K)$. This shows that, in this model, the variances of probability measures $\mu \in \borel{M}$ are uniformly bounded, and helps clarify why the covariance tensor is well defined for arbitrary $\mu$. Moreover, since the variance satisfies $\sigma^2 = \sum_{i=1}^\infty \sigma_i^2$, as in standard Euclidean PCA, we can interpret $\sigma_i^2/\sigma^2$ as the fraction of the total variance explained by the $i$th principal mode and $\sum_{i=1}^k \sigma_i^2/\sigma^2$ as the proportion of the total variance explained by the first $k$ principal components.

%-----------------------

\section{The Gradient-Field Embedding} \label{S:gfembed}

In the terminology of data analysis, in Section \ref{S:rpca}, PCA for a probability measure $\mu \in \borel{M}$ was done by ``embedding'' $M$ into the ``feature space'' $\vf_2 (M)$, where the ambient Hilbert space structure allows us to perform standard linear PCA. Although this is a loose usage of the term embedding, in this section we show that the mapping $\Phi_t \colon M \to \vf_2 (M)$ given by
\begin{equation} \label{E:gfembedding}
\Phi_t (y) = v_y,
\end{equation}
where $v_y (x) = \nabla_x k_t (x,y)$, is an actual embedding. Here we prove this for $M$ compact, deferring the more technical argument for non-compact manifolds of bounded geometry to the Appendix (see Theorem \ref{T:embed2}). From a practical perspective, through the embedding $\Phi_t$, we can approach computation of manifold PCA (discussed in the next section) as a kernel PCA problem. However, as already noted in the Introduction, a key difference is that, in the present formulation of manifold PCA, the embedding is known explicitly.

We begin with a couple of lemmas needed in the proof of the embedding theorem.

\begin{lemma} \label{L:injlemma}
Let $(M,g)$ be a complete Riemannian manifold and $y,y' \in M$. 
\begin{enumerate}[label={(\roman*)}]
\item If there exists $t_0 >0$ such that $k_{t_0} (x,y) = k_{t_0}(x,y')$, $\forall x \in M$, then $k_t (x,y) = k_t (x,y')$, $\forall x \in M$ and $\forall t>0$.
\item If there exists $t_0 >0$ such that $k_{t_0} (x,y) = k_{t_0}(x,y')$, $\forall x \in M$, then $y=y'$.
\end{enumerate}
\end{lemma}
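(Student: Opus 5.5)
Both parts rest on the semigroup property (iv) and the approximation property (v) of the heat kernel. Throughout, fix $y,y'$ and set $h(x) \coloneqq k_{t_0}(x,y) - k_{t_0}(x,y')$. By hypothesis $h \equiv 0$.

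\textbf{Part (i), times $t > t_0$.} Write $t = t_0 + s$ with $s>0$. The semigroup property and symmetry give
\[
k_t(x,y) - k_t(x,y') = \int_M k_s(x,z)\,\bigl(k_{t_0}(z,y)-k_{t_0}(z,y')\bigr)\, dvol(z) = 0 .
\]
So the difference vanishes for all $t \geq t_0$.

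\textbf{Part (i), times $t < t_0$.} This is the harder direction and the step I expect to be the main obstacle. I plan to use analyticity in $t$: for fixed $x$, the function $t \mapsto k_t(x,y) - k_t(x,y')$ should be real-analytic on $(0,\infty)$, because the heat semigroup is analytic.
\begin{enumerate}
\item On a closed manifold this is immediate from the spectral decomposition $\sum_i e^{-\lambda_i t}\phi_i(x)\phi_i(y)$. The series converges uniformly on $t\ge\epsilon$, together with all its $t$-derivatives, so it extends holomorphically to $\operatorname{Re} t>0$.
\item For a general complete manifold, write $k_t(\cdot,y) = e^{(t-\epsilon)\Delta}k_\epsilon(\cdot,y)$. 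Here $k_\epsilon(\cdot,y)\in L_2$, and the self-adjoint semigroup $e^{\tau\Delta}$ on $L_2$ is analytic in $\tau$. Pointwise evaluation is then handled through one more semigroup factor: $k_t(x,y) = \langle k_{\epsilon}(x,\cdot),\, e^{(t-2\epsilon)\Delta}k_\epsilon(\cdot,y)\rangle_{L_2}$, which is analytic in $t>2\epsilon$.
\end{enumerate}
A function that is real-analytic on $(2\epsilon,\infty)$ and vanishes on $[t_0,\infty)$ vanishes identically there. Letting $\epsilon\to0$ covers all $t>0$.

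\textbf{Alternative for $t<t_0$ (spectral).} Alternatively, one can argue spectrally on $L_2$ with $f_\epsilon = k_\epsilon(\cdot,y)-k_\epsilon(\cdot,y')$. The vector $e^{(t_0-\epsilon)\Delta}f_\epsilon$ vanishes, and $e^{\tau\Delta}$ is injective for a nonnegative self-adjoint generator, by the functional calculus since $e^{-\tau\lambda}>0$. Hence $f_\epsilon=0$ for every $\epsilon$. Continuity in $x$ then upgrades the a.e.\ equality to everywhere. This version avoids analytic-continuation bookkeeping, so I would actually present this route.

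\textbf{Part (ii).} By (i), $k_t(\cdot,y)=k_t(\cdot,y')$ for all $t>0$. Take any smooth compactly supported $f$. Integrating against $f(x)\,dvol(x)$ and using symmetry gives
\[
e^{t\Delta}f(y) = e^{t\Delta}f(y') \quad \text{for all } t>0 .
\]
Let $t\to0^+$. Property (v) gives convergence in $C^\infty$, in particular pointwise, so $f(y)=f(y')$ for every $f\in C_c^\infty(M)$. If $y\neq y'$, choose a bump function supported in a small ball around $y$ that avoids $y'$. This gives $f(y)=1\neq 0=f(y')$, a contradiction, so $y=y'$.
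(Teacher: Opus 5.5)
Your proposal is correct and is essentially the paper's proof. It uses the same three ingredients: the semigroup identity for $t>t_0$, injectivity of $e^{\tau\Delta}$ for $t<t_0$, and the approximate-identity property (v) with point separation for (ii). Your functional-calculus justification of injectivity (via $e^{-\tau\lambda}>0$), the observation that $k_\epsilon(\cdot,y)\in L_2$ together with the a.e.-to-pointwise upgrade, and your use of $C_c^\infty$ test functions (which matches property (v) as stated) only make explicit what the paper leaves implicit. The analyticity detour is not needed.
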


\begin{proof}
(i) This follows from the semigroup property of the heat operator $e^{t \Delta}$. Indeed, for $0<t<t_0$,
\begin{equation}
e^{(t_0-t)\Delta} k_t (\cdot,y) = k_{t_0} (\cdot,y) = k_{t_0} (\cdot,y') = e^{(t_0-t)\Delta} k_t (\cdot,y').    
\end{equation}
Since $e^{(t_0-t)\Delta}$ is injective, it follows that $k_t (x,y) = k_t (x,y')$, $\forall x \in M$. For the case $t>t_0$, we have 
\begin{equation}
k_t (\cdot,y) = e^{(t-t_0)\Delta} k_{t_0} (\cdot,y) = e^{(t-t_0)\Delta} k_{t_0} (\cdot,y') = k_t (\cdot,y').
\end{equation}
%-------------
(ii) By (i), the heat kernel satisfies $k_t (x,y)=k_t(x,y')$, $\forall x\in M$ and $\forall t>0$. By symmetry, $k_t(y,x) = k_t(y',x)$. Therefore, for any continuous function with compact support $f \colon M \to \real$, we have
\begin{equation}
f(y)= \lim_{t\to 0^+} \int_M k_t(y,x) f(x) dvol(x) = \lim_{t\to 0^+} \int_M k_t(y',x) f(x) dvol(x) = f(y').
\end{equation}
This implies that $y=y'$ since continuous functions with compact support separate points in $M$.
\end{proof}

\begin{proposition} \label{P:injectivity}
If $(M,g)$ is a connected and complete Riemannian $d$-manifold of bounded geometry, $d \geq 1$, then the map $\Phi_t \colon M \to \vf_2(M)$ is injective for each fixed $t>0$.
\end{proposition}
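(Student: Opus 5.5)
Suppose $\Phi_t(y)=\Phi_t(y')$, i.e.\ $\nabla_x k_t(x,y)=\nabla_x k_t(x,y')$ for almost every $x\in M$. Both fields are smooth, so the equality holds for every $x$. The plan is to reduce this to equality of the heat kernels themselves and then invoke Lemma \ref{L:injlemma}(ii).

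\emph{Step 1.} Set $h(x)=k_t(x,y)-k_t(x,y')$. Then $h$ is smooth and $\nabla h\equiv 0$. Since $M$ is connected, $h\equiv c$ for some constant $c\in\real$.

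\emph{Step 2.} We show that $c=0$. There are two cases, and the unbounded-volume case is the main point requiring care.

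If $vol(M)=\infty$, we use the Gaussian upper bound for the heat kernel on manifolds of bounded geometry (Li--Yau/Davies): $k_t(x,y)\leq C e^{-d_g^2(x,y)/c_t}$. Any complete noncompact manifold contains points $x$ with $d_g(x,y)\to\infty$, and along such points both $k_t(x,y)$ and $k_t(x,y')\to 0$, forcing $c=0$. Alternatively, we can avoid the Gaussian bound: property (iii) gives $\int_M |h|\,dvol \leq 2$, while $\int_M |c|\,dvol=|c|\,vol(M)=\infty$ unless $c=0$. This second argument is cleaner, and I would use it.

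If $vol(M)<\infty$, integrate $h$ over $M$. By property (iii), $\int_M k_t(x,y)\,dvol(x)=1=\int_M k_t(x,y')\,dvol(x)$, so $c\cdot vol(M)=0$ and hence $c=0$.

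In either case, $k_t(\cdot,y)=k_t(\cdot,y')$ on $M$.

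\emph{Step 3.} Apply Lemma \ref{L:injlemma}(ii) with $t_0=t$ to conclude that $y=y'$, which proves injectivity.

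Both integral arguments in Step 2 rely only on stochastic completeness, which the paper guarantees under bounded geometry. So no real obstacle is expected beyond justifying the a.e.-to-everywhere passage, and that follows from the continuity of the gradient fields.
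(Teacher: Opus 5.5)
Your proof is correct and follows the paper's argument. Equality of the gradient fields gives $k_t(\cdot,y)-k_t(\cdot,y')\equiv c$, integrating against $dvol$ with $\int_M k_t(x,\cdot)\,dvol(x)=1$ forces $c=0$, and Lemma \ref{L:injlemma}(ii) then gives $y=y'$. Your $L^1$ argument for the infinite-volume case (a nonzero constant is not integrable, while $h$ is) and your continuity argument for passing from a.e.\ to everywhere just make explicit what the paper's one-line ``integrate over $M$'' step leaves implicit.
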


\begin{proof}
Let $y,y' \in M$ be such that $\Phi_t (y) = \Phi_t (y')$; that is, $\nabla_x k_t (x,y) = \nabla_x k_t (x,y')$, $\forall x \in M$. This implies that there is a constant $c$ such that
\begin{equation} \label{E:difference}
k_t(x,y)-k_t(x,y') =c, 
\end{equation}
$\forall x \in M$. Since the heat kernel on manifolds of bounded geometry has the property that
\begin{equation}
\int_M k_t (x,y) dvol(x)=1,
\end{equation}
$\forall y \in M$, integrating \eqref{E:difference} with respect to $x$, we conclude that $c=0$. Therefore, $k_t(x,y) = k_t(x,y')$, $\forall x\in M$. Lemma \ref{L:injlemma} implies that $y=y'$, proving injectivity.    
\end{proof}

\begin{lemma} \label{L:span}
If $M$ is a connected and closed Riemannian manifold, then
\[
\mathrm{span}\,\{\nabla \phi_i(y) \colon i\geq1\}=T_y M,
\]
for any $y\in M$.
\end{lemma}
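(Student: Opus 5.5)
We argue by contradiction using duality. Fix $y \in M$ and suppose the span $W \subseteq T_y M$ of $\{\nabla \phi_i(y) : i \geq 1\}$ is a proper subspace. Then there is a nonzero $h \in T_y M$ orthogonal to $W$, so that
\[
d\phi_i(y)(h) = \inner{\nabla \phi_i(y)}{h}_y = 0 \quad \text{for all } i \geq 1 .
\]
The same holds for $i=0$, since $\phi_0$ is constant ($M$ is connected and closed, so $\phi_0 = vol(M)^{-1/2}$). Hence the directional derivative $D_h \phi_i(y)$ vanishes for every eigenfunction. The goal is to show that this forces $D_h f(y) = 0$ for every smooth $f$, which is absurd: a coordinate function in a chart around $y$ with nonzero derivative in the direction $h$ gives a contradiction.

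To pass from eigenfunctions to arbitrary smooth functions, we use the spectral expansion together with the fact that eigenfunction expansions of smooth functions converge in $C^\infty(M)$. Take $f \in C^\infty(M)$ and write $f = \sum_i a_i \phi_i$ with $a_i = \int_M f \phi_i \, dvol$. Since $f$ is smooth, $\lambda_i^k a_i = (-1)^k \int_M (\Delta^k f) \phi_i \, dvol$ for every $k$, so the coefficients decay faster than any power of $\lambda_i$. We combine this with Weyl's law $\lambda_i \sim c\, i^{2/d}$ and with a Sobolev or elliptic estimate bounding $\|\phi_i\|_{C^1}$ by a polynomial in $\lambda_i$. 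Both the bound $\|\phi_i\|_{C^1} \leq C(1+\lambda_i)^{N}$ for some $N$ and Weyl's law follow from the embedding $H^{s}\hookrightarrow C^1$ for $s > d/2+1$ together with $\|\phi_i\|_{H^{2k}} \leq C(1+\lambda_i)^k$.

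With these estimates the series converges in $C^1$, so differentiation at $y$ can be done term by term:
\[
D_h f(y) = \sum_i a_i\, D_h\phi_i(y) = 0 .
\]
A more self-contained alternative uses the heat kernel instead. Differentiating $k_t(x,y) = \sum_i e^{-\lambda_i t}\phi_i(x)\phi_i(y)$ in $y$ along $h$ gives $D_h^{(y)} k_t(\cdot,y) = 0$, again by $C^1$ convergence. Then
\[
D_h\bigl(e^{t\Delta} f\bigr)(y) = \int_M D_h^{(y)} k_t(y,x)\, f(x)\, dvol(x) = 0,
\]
and property (v) of the heat kernel ($C^\infty$ convergence of $e^{t\Delta}f \to f$ as $t\to0^+$) yields $D_h f(y) = 0$. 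Either route produces the contradiction described in the first paragraph.

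The main obstacle is justifying the term-by-term differentiation, that is, the $C^1$ convergence of the eigenfunction or heat-kernel series. We plan to cite the standard elliptic regularity and Sobolev embedding bounds on closed manifolds for this step rather than reprove them. The remaining steps are immediate.
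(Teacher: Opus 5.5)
Your proposal is correct and follows essentially the paper's route. You take a nonzero $h \in T_yM$ annihilated by every $d\phi_i(y)$, pass to all $f \in C^\infty(M)$ by approximating with finite eigenfunction combinations in a topology at least as strong as $C^1$, and reach a contradiction with a cut-off coordinate function. Your coefficient-decay argument just spells out the paper's bare appeal to elliptic regularity, and your heat-kernel variant ends the same way as the paper's proof of Lemma~\ref{L:span1} in the appendix.

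One quibble: Sobolev embedding plus $\|\phi_i\|_{H^{2k}} \leq C(1+\lambda_i)^k$ give only a polynomial bound on the eigenvalue counting function, not the sharp Weyl law. That bound is all you need. You can also skip counting entirely: Parseval gives $\sum_i \lambda_i^{2k} a_i^2 = \|\Delta^k f\|_{L_2}^2$, so the partial sums converge in $H^{2k}$ and hence in $C^1$.
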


\begin{proof}
Suppose that the conclusion fails. Then there exist $y\in M$ and a nonzero vector $v\in T_yM$ such that
\begin{equation}
\inner{\nabla \phi_j(y)}{v}_y =0,
\end{equation}
for all $i \geq 1$. Equivalently, $d(\phi_i)_y (v)=0$, for all $i\geq 1$. Since $\varphi_0$ is constant, we also have $d(\phi_0)_y(v)=0$.

Let $f\in C^\infty(M)$. The eigenfunctions $\phi_i$, $i \geq 0$, form a complete orthonormal set of $L^2(M)$ and elliptic regularity implies that their linear span is dense in $C^\infty(M)$ with respect to the $C^\infty$-topology. Hence, there exists a sequence
\begin{equation}
f_m=\sum_{i=0}^{N_m} a_i^m \phi_i,
\end{equation}
$m \geq 1$, such that $f_m \to f$ in $C^\infty(M)$.
For every $m\geq 1$,
\begin{equation}
d(f_m)_y(v)	=	\sum_{i=0}^{N_m} a_i^m d(\phi_i)_y (v) =0.
\end{equation}
Passing to the limit, we obtain $df_y(v)=0$ for every $f\in C^\infty(M)$. This is a contradiction since $v\neq 0$. Indeed, choosing local coordinates $(x^1,\ldots,x^n)$ around $y$, there exists an index $j$ such that $dx^j_y(v)\neq 0$. Let $\eta\in C_0^\infty(M)$ be a bump function satisfying $\eta\equiv1$ on a neighborhood of $y$, and set $f=\eta\,x^j$. Then $f\in C_0^\infty(M)$ and
\begin{equation}
df_y(v)=dx^j_y(v)\neq 0,
\end{equation}
a contradiction. 
\end{proof}

\begin{theorem} \label{T:embed1}
If $M$ is a connected, closed $d$-dimensional Riemannian manifold, $d \geq 1$, then the mapping $\Phi_t \colon M \to \vf_2 (M)$ is an embedding for each $t>0$.
\end{theorem}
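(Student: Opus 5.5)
Since $M$ is compact and $\vf_2(M)$ is Hausdorff, a continuous injective map $M \to \vf_2(M)$ is automatically a homeomorphism onto its image. So it suffices to show three things: $\Phi_t$ is injective, which is Proposition \ref{P:injectivity}; $\Phi_t$ is continuous, indeed smooth into the Hilbert space; and the differential $d(\Phi_t)_y$ is injective at every $y \in M$, so that $\Phi_t$ is a smooth immersion. Together these give a smooth embedding.

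For smoothness, I will use the spectral decomposition
\[
v_y = \Phi_t(y) = \sum_{i\geq 1} e^{-\lambda_i t}\phi_i(y)\,\nabla\phi_i .
\]
The fields $\nabla\phi_i$ are pairwise orthogonal in $\vf_2(M)$. Indeed, integration by parts gives $\innervf{\nabla\phi_i}{\nabla\phi_j} = -\int_M \phi_i\Delta\phi_j\, dvol = \lambda_i\delta_{ij}$. Hence $\{\nabla\phi_i/\sqrt{\lambda_i}\}_{i\geq1}$ is an orthonormal set, and the coordinates of $\Phi_t(y)$ in this set are $\sqrt{\lambda_i}e^{-\lambda_i t}\phi_i(y)$. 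Standard sup-norm bounds on eigenfunctions and their derivatives are polynomial in $\lambda_i$, for example via Sobolev embedding and elliptic estimates giving $\|\phi_i\|_{C^k}\le C\lambda_i^{a_k}$. Weyl's law gives polynomial growth of $\lambda_i$ in $i$. Together these show that the series of coordinate functions and all their derivatives converge uniformly on $M$. Therefore $\Phi_t$ is smooth as a map into $\vf_2(M)$, and its derivative is obtained by termwise differentiation:
\[
d(\Phi_t)_y(h) = \sum_{i\ge1} e^{-\lambda_i t}\, d(\phi_i)_y(h)\,\nabla\phi_i .
\]

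For immersivity, suppose $d(\Phi_t)_y(h)=0$. Taking the inner product with $\nabla\phi_j$ and using orthogonality gives $\lambda_j e^{-\lambda_j t}\, d(\phi_j)_y(h)=0$ for all $j\ge1$. Since $\lambda_j>0$ for $j\geq 1$, this means $\inner{\nabla\phi_j(y)}{h}_y=0$ for every $j\ge1$. By Lemma \ref{L:span} the vectors $\nabla\phi_j(y)$ span $T_yM$, so $h=0$. Thus $\Phi_t$ is an injective immersion of a compact manifold, hence an embedding.

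The main obstacle is the analytic justification of termwise differentiation, that is, the uniform convergence of the differentiated series in $\vf_2(M)$. I would handle it with the polynomial $C^k$ eigenfunction bounds, which the exponential factor $e^{-\lambda_i t}$ easily dominates. An alternative is to avoid series entirely: $(x,y)\mapsto \nabla_x k_t(x,y)$ is smooth on the compact set $M\times M$. Differentiating under the integral sign then shows that $y\mapsto v_y$ is $C^1$ into $\vf_2(M)$, with derivative $h\mapsto \nabla_x (d_y k_t(x,y)(h))$. The orthogonality computation above can then be carried out directly on that expression.
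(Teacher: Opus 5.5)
Your proposal is correct and follows essentially the same route as the paper. Injectivity comes from Proposition~\ref{P:injectivity}; immersivity comes from differentiating the spectral expansion of $\nabla_x k_t(x,y)$, pairing with $\nabla\phi_j$ via $\innervf{\nabla\phi_i}{\nabla\phi_j}=\lambda_i\delta_{ij}$, and invoking Lemma~\ref{L:span}. Your one addition is the justification that $\Phi_t$ is smooth into $\vf_2(M)$ and may be differentiated termwise, using polynomial $C^k$ eigenfunction bounds dominated by $e^{-\lambda_i t}$ (or, alternatively, differentiation under the integral sign). The paper's proof takes this point for granted.
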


\begin{proof}
Since $M$ is compact, it suffices to show that $\Phi_t$ is an injective immersion. Injectivity has been shown in Proposition \ref{P:injectivity} in the more general setting of manifolds of bounded geometry. To verify that $\Phi_t$ is an immersion, let $y\in M$ and $v\in T_y M$. We show that if $d(\Phi_t)_y (v)=0$, then $v=0$. From the spectral expansion of the heat kernel, we can write
\begin{equation}
\nabla_x k_t(x,y) =\sum_{i=1}^{\infty}e^{-\lambda_i t} \phi_i (y) \nabla \phi_i(x).
\end{equation}
Differentiating $\Phi_t$ at $y$ in the direction $v$, we obtain
\begin{equation}
d(\Phi_t)_y(v)=\sum_{i=1}^{\infty}	e^{-\lambda_i t}
d(\phi_i)_y(v)	\nabla \phi_i .
\end{equation}
If $d(\Phi_t)_y (v)=0$, then
\begin{equation}
\sum_{i=1}^{\infty}	e^{-\lambda_i t} d(\phi_i)_y (v)\nabla \phi_i=0
\end{equation}
in $\vf_2 (M)$.	Taking the $\vf_2 (M)$-inner product with the gradient field $\nabla\phi_j$, we obtain
\begin{equation} \label{E:vanish1}
0 =\sum_{i=1}^{\infty}e^{-\lambda_i t}	d(\phi_i)_y(v)	\int_M
\inner{\nabla\varphi_i (x)}{\nabla\varphi_j(x)}_x\, dvol(x).
\end{equation}
Integration by parts yields
\begin{equation} \label{E:vanish2}
\begin{split}
\int_M \inner{\nabla\phi_i (x)}{\nabla \phi_j(x)}_x\, dvol(x)
&=- \int_M \phi_j(x)\Delta \phi_i (x)\, dvol(x) \\
&= \lambda_i \int_M \phi_i(x) \phi_j(x) \, dvol(x)
= \lambda_i\delta_{ij}.
\end{split}
\end{equation}
Equations \eqref{E:vanish1} and \eqref{E:vanish2} imply that $\lambda_j e^{-\lambda_j t}\, d(\phi_j)_y(v)=0$,
for any $j \geq 0$. Since the multiplicity of the zero eigenvalue of $\Delta$ coincides with the number of connected components of $M$, the hypothesis that $M$ is connected implies that $\lambda_j>0$ for $j\geq 1$. Thus, $d(\phi_j)_y(v)=0$, $\forall j \geq 1$. Equivalently, $\inner{\nabla\phi_j (y)}{v}_y=0$, $\forall j \geq 1$. By Lemma \ref{L:span}, $\{\nabla\phi_j(y) \colon j\geq 1\}$ spans $T_y M$, so $v=0$. This concludes the proof.
\end{proof}

%-----------------------

\section{Finite-Dimensional Reduction and RiePCA} \label{S:findim}

The formulation of covariance in Section \ref{S:cvt} accounts for covariation of a distribution $\mu$ about all points $x \in M$ because, in general, there may be no particularly salient reference points. In some situations, however, there may be a special set of points such as isolated local maximum points of the heat-kernel density estimator for $\mu$, at a given bandwidth, that can be interpreted as the modes of $\mu$ at that scale. For this reason, we introduce a variant of $\Sigma_\mu$ that measures covariation anchored at a fixed finite set $X_r = \{x_1, \ldots, x_r\} \subseteq M$, thus replacing a vector field over the entire manifold $M$ with a vector field over $X_r$. Such a vector field is given by a collection of tangent vectors $v_j \in T_{x_j} M$, $1 \leq j \leq r$. A virtue of this reduction is that it makes computation on high-dimensional manifolds possible, as densely sampling a vector field on $M$ is computationally intractable.  

For this finite-dimensional model, we can drop the assumptions that $M$ is complete and has bounded geometry. We also allow more general smooth potential functions, for example, proxies for the heat kernel that are more computable. Let $u \colon M \times M \to \real$ be a smooth, non-negative potential function such that, for each $x \in M$, the inequality
\begin{equation} \label{E:gradu}
\|\nabla_x u(x,y)\|_x \leq C_x    
\end{equation}
holds for all $y \in M$, where $C_x>0$ is a constant that may depend on $x$. By \eqref{E:gradhk}, this is satisfied by the heat kernel. Let $T(M,r)$ be the vector space $T(M,r) \coloneqq T_{x_1} (M) \times \ldots \times T_{x_r} (M)$ equipped with the inner product
\begin{equation}
\innervf{v}{w}_r \coloneqq \inner{v_1}{w_1}_{x_1} +\ldots +\inner{v_r}{w_r}_{x_r}.
\end{equation}
For each fixed $y \in M$, denote by $v_y^r$ the discrete vector field
\begin{equation} \label{E:dvf}
v_y^r = (\nabla_x u(x_1,y), \ldots ,\nabla_x u(x_r,y)) \in T(M,r)
\end{equation}
supported on the anchor set $X_r$. The {\em covariance tensor} $\Sigma_{\mu,r} \in T(M,r) \otimes T(M,r)$, with respect to the potential $u$, is defined as
\begin{equation} \label{E:fcovt}
\Sigma_{\mu,r} \coloneqq \int_M (v_y^r - \bar{v}) \otimes (v_y^r - \bar{v}) d\mu(y),
\end{equation}
where $\bar{v}$ is the mean discrete vector field over $X_r$. The tensor $\Sigma_{\mu,r}$ is well defined by \eqref{E:gradu}. The corresponding self-adjoint, positive semi-definite linear mapping $\Sigma_{\mu,r} \colon T(M,r) \to T(M,r)$ is given by
\begin{equation} \label{E:fcovop}
\Sigma_{\mu,r} (v) \coloneqq \int_M (v_y^r - \bar{v}) \innervf{v_y^r - \bar{v}}{v}_r \,d\mu (y).
\end{equation}
Of particular practical interest is the covariance $\Sigma_{n,r}$ of an empirical measure $\mu_n = \sum_{i=1}^n \delta_{y_i}/n$, where $y_1, \ldots, y_n \in M$. In this case, \eqref{E:fcovt} and \eqref{E:fcovop} can be written as
\begin{equation}
\Sigma_{n,r} = \frac{1}{n} \sum_{i=1}^n (v_{y_i}^r- \bar{v}) \otimes (v_{y_i}^r - \bar{v})
\mbox{\quad and \quad}
\Sigma_{n,r}  (v) = \frac{1}{n} \sum_{i=1}^n (v_{y_i}^r - \bar{v}) \innervf{v_{y_i}^r- \bar{v}}{v}_r,
\end{equation}
respectively.

As before, the principal components $e_i \in T(M,r)$ are the unit-length eigenvectors of $\Sigma_{\mu,r}$ associated with the positive eigenvalues $\sigma_{i,r}^2 > 0$ and can be completed to an orthonormal basis of eigenvectors of $\Sigma_{\mu,r}$ by adding an orthonormal set that spans the nullspace of $\Sigma_{\mu,r}$. The variance of $\mu$ is defined as $\sigma_r^2 \coloneqq Tr(\Sigma_{\mu,r})$ and satisfies $\sigma_r^2 = \sum_{i=1}^r \sigma_{i,r}^2$. We refer to this version of principal component analysis as RiePCA.

\medskip

Next, we describe the algorithmic steps for RiePCA anchored at $X_r = \{x_1, \ldots, x_r\}$, applied to a probability measure $\mu$ supported on a finite set $\{y_1, \ldots, y_n\} \subseteq M$. Write the probability distribution as $\mu = \sum_{j=1}^n \mu_j \delta_{y_j}$ and let $D_\mu$ be the diagonal matrix $D_\mu = \operatorname{diag}(\mu_1, \ldots, \mu_n)$.

\begin{algorithm} \label{A:riepca}
(RiePCA Algorithm)

\begin{enumerate}%[label={(\roman*)}]
    \item Calculate the vector fields $v_j \coloneqq v^r_{y_j}= (\nabla_x u(x_1,y_j), \ldots, \nabla_x u(x_r,y_j))$, $1 \leq j \leq n$.
    \item Center the vector fields: $v_j \mapsto v_j - \bar{v}=
    v_j - \sum_{j=1}^n v_j\mu_j$.
    \item Calculate the Gram matrix $G$ whose $(i,j)$-entry is $g_{ij} = \innervf{v_i}{v_j}_r$.
    \item Set $K \coloneqq D_\mu^{1/2}G D_\mu^{1/2}$.
    \item Diagonalize $K$ and index the eigenvalues $\sigma_i^2$ in non-increasing order: (i) the eigenvalues of $K$ are the same as the eigenvalues of the covariance operator; (ii) if $f_i = (f^i_1, \ldots, f^i_n)$ is an eigenvector of $K$ with  eigenvalue $\sigma^2_i$, then $\hat{e}_i = \sum_{j=1}^n \sqrt{\mu_j} f^i_j v_j$ is a corresponding eigenvector of the covariance, so that $e_i = \hat{e}_i/\|\hat{e}_i\|$ is a principal vector field of unit norm.
    \item The PC-score of a point $y \in M$ along the $i$th principal direction is given by $\innervf{v_y}{e_i}_r$. 
\end{enumerate}
\end{algorithm}

\begin{remark}
Algorithm \ref{A:riepca} approaches RiePCA through the Gram matrix. However, if we have an orthonormal basis for $T(M,r)$, for example, one gotten from orthonormal bases for each tangent space $T_{x_i}M$, $1 \leq i \leq r$, an alternative is to represent the vector fields $v_j$ in coordinates through projections onto the basis elements and compute the covariance matrix directly. This is how we approach RiePCA for data in Euclidean space.
\end{remark}

%-----------------

\begin{example} \label{EX:m1}
We apply RiePCA with the heat kernel as a potential function to a synthetic dataset consisting of two disjoint elliptical regions in $\mathbb{R}^2$, one thinner and more elongated than the other. Each ellipse is uniformly sampled with the same number of points ($700$ data points per region) and the probability measure $\mu$ attributes the same weight to all points, so that each elliptical cluster contains $50\%$ of the probability mass. At $t=1$, the heat kernel density estimator for $\mu$ yields one local minimum in each ellipse, which are used as anchor points. This leads to a $4$-dimensional representation of the data (2D vector fields over two points). Figure~\ref{fig:ellipses}a shows each of the first four principal vector fields (PC1–PC4) as a single arrow at one of the reference points because all four principal vector fields vanish at the other point. At this scale, the principal directions adapt to the local geometry and capture variation within each cluster. PC1 detects the major axis of the longer ellipse, PC2 and PC3 detect the major and minor axes of the rounder cluster, and PC4 is about variation orthogonal to PC1 in the thinner ellipse. As $t$ increases, the heat diffusion progressively suppresses the local structure of the dataset. At $t=12$, the density estimator has a unique maximum, and the first two principal components capture more of the global structure of the dataset, as depicted in Figure~\ref{fig:ellipses}b, similar to the behavior of standard PCA with the single reference point falling near the mean of the data. Figure~\ref{fig:ellipses}c provides heat maps of the corresponding PC-scores for all data points.
%----------------
\begin{figure}[ht!]
\begin{tabular}{ccc}
\begin{tabular}{c}
\includegraphics[width=0.25\textwidth]{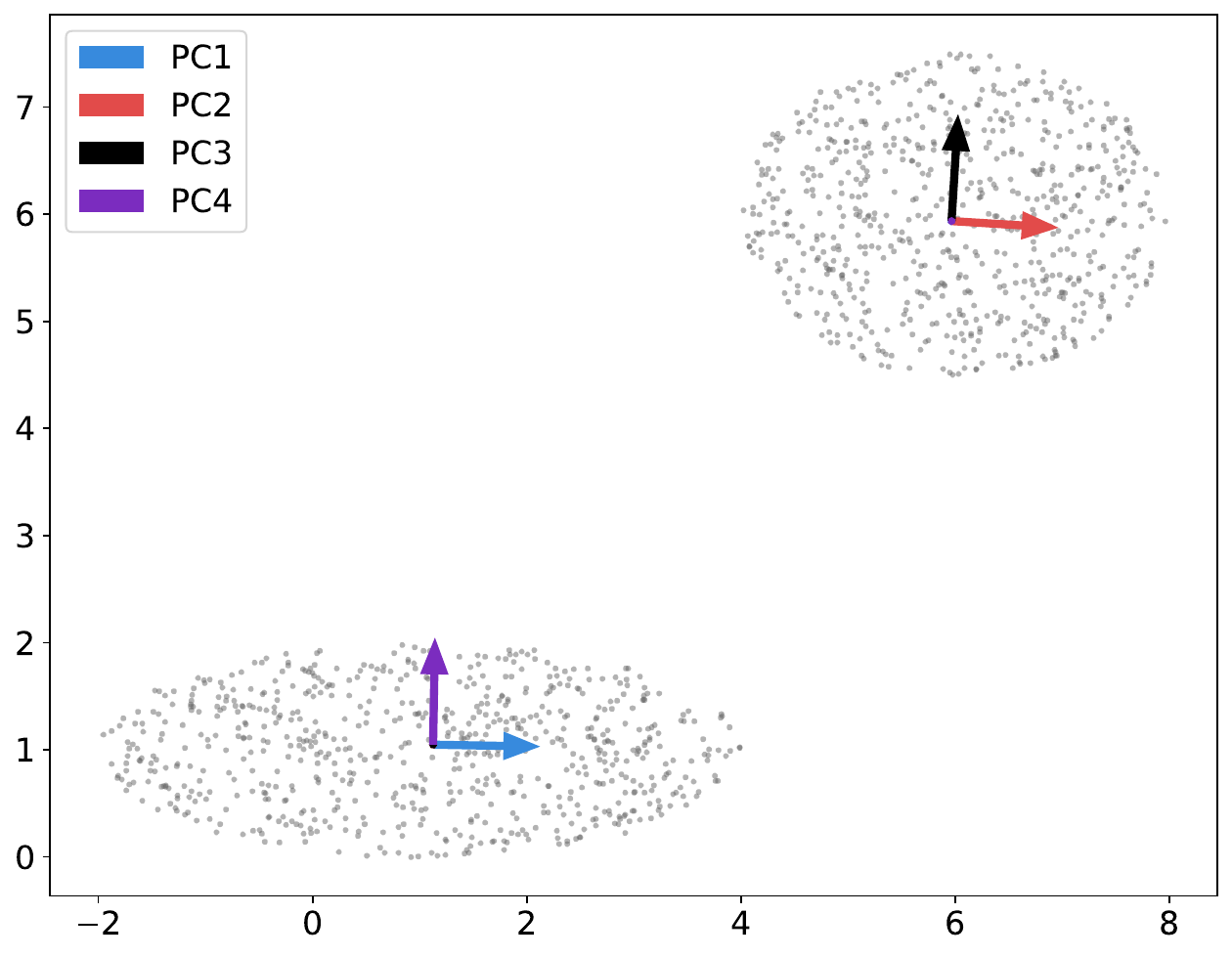} 
\end{tabular}
&
\begin{tabular}{c}
\includegraphics[width=0.25\textwidth]{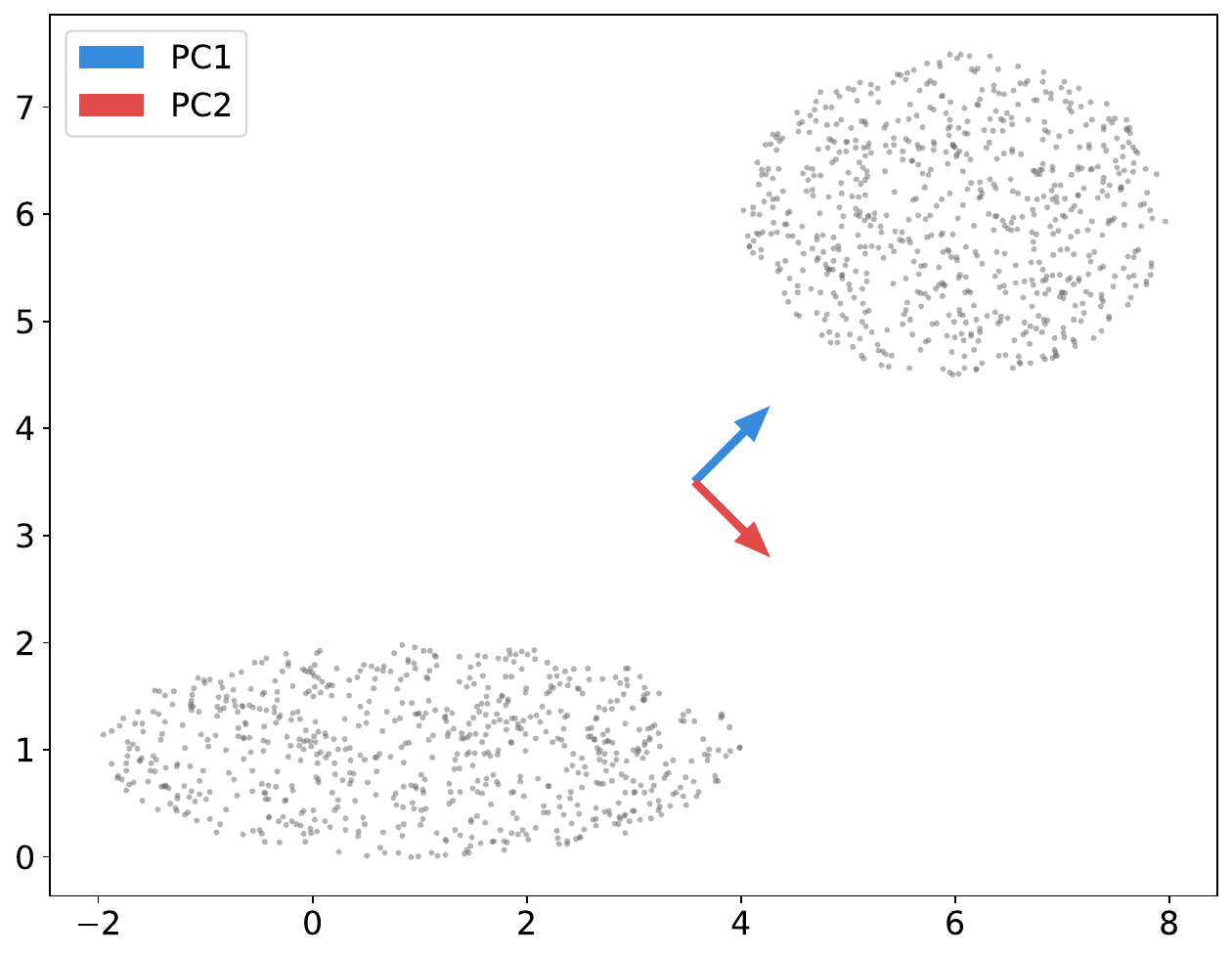} 
\end{tabular}
&
\begin{tabular}{c}
\includegraphics[width=0.37\textwidth]{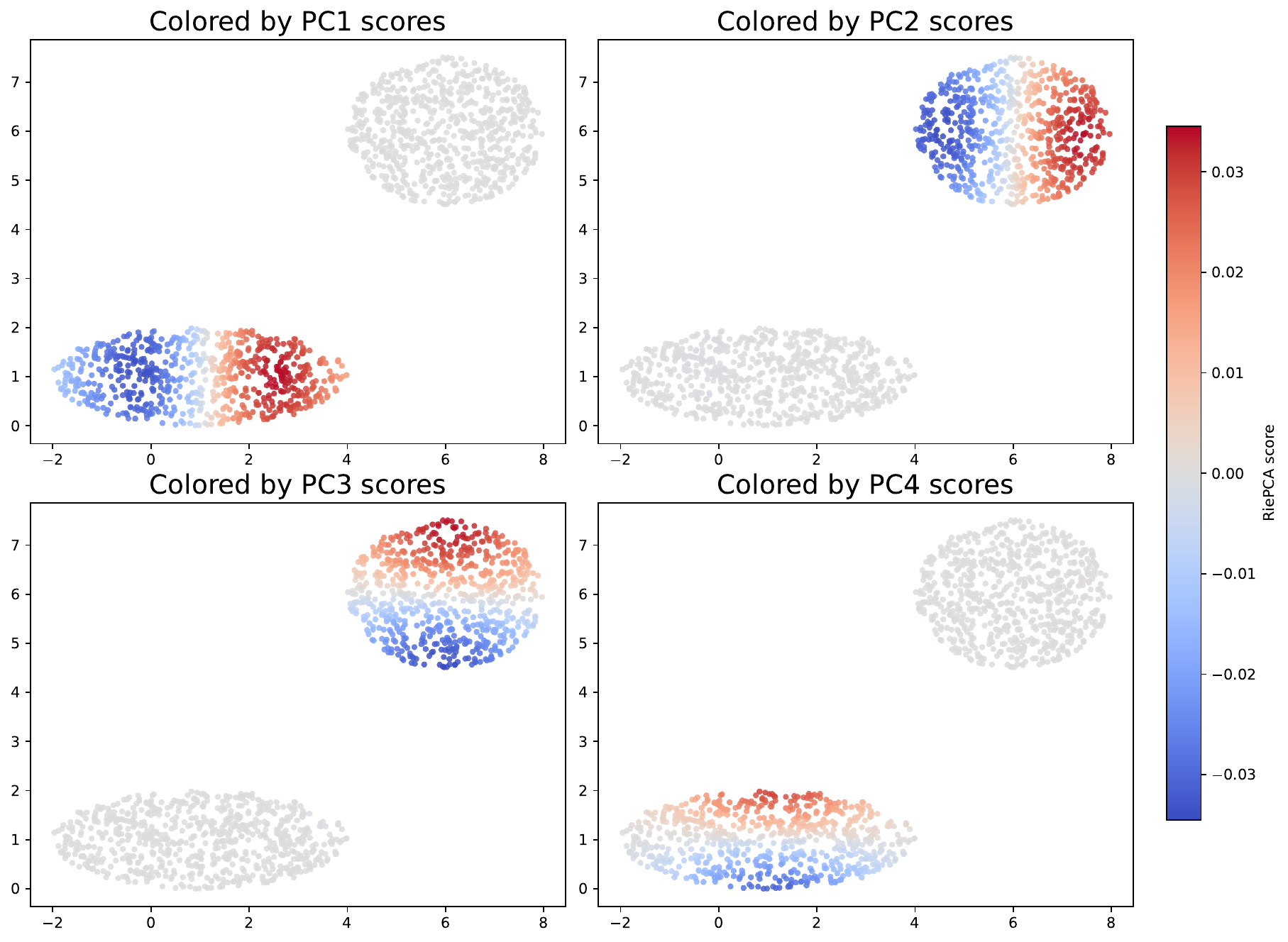}
\end{tabular} \\
(a) & (b) & (c)
\end{tabular}
\caption{(a) The principal vector fields PC1-PC4 at $t=1$; (b) the principal vector fields PC1 and PC2 at $t=12$; (c) the dataset colored by their first four PC-scores at $t=1$.}
\label{fig:ellipses}
\end{figure}

\end{example}

\begin{example} \label{EX:m2}
This is a non-Euclidean analog of Example \ref{EX:m1}. The synthetic dataset comprises two well-separated elliptical regions ($350$ data points per region) on the unit sphere $S^2$, where each region is generated by sampling an anisotropic Gaussian distribution on the tangent spaces at the points $p=(0,0,1)$ and $p'=(0.4/\sqrt{0.52},0,-0.6/\sqrt{0.52})$ and mapping the samples onto the sphere via the exponential map. The probability distribution assigns $30\%$ and $70\%$ of the total mass to the blue and red clusters, respectively, with the mass distributed uniformly among the points within each cluster. The potential function used is $u (x,y) = e^{-\frac{(\arccos(x\cdot y))^2}{4t}}/(4\pi t)$, $t=0.15$.
The two reference points selected are the local maxima of the corresponding kernel density estimator at scale $t$, one within each cluster. The principal vector fields PC1-PC4 are depicted as single arrows in Figure~\ref{fig:2:d}, as these fields vanish at one of the points.
%----------------------
\begin{figure*}[htbp!]
    \centering
    \includegraphics[width=\textwidth]{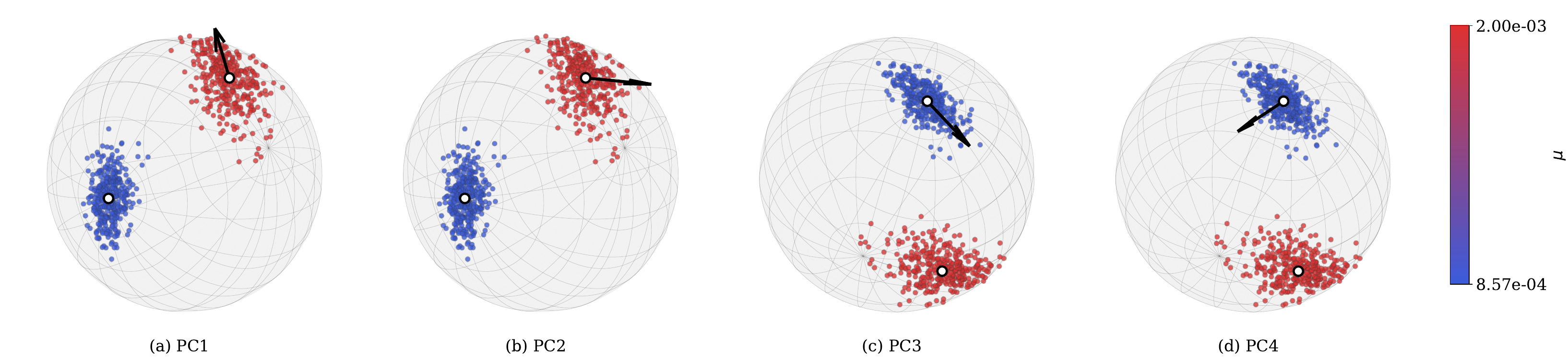}
    \caption{The first four RiePCA principal vector fields at $t=0.15$}
    \label{fig:2:d}
\end{figure*}
%------------------
The principal vector fields show that, at appropriate scales, RiePCA captures the local geometry of the clusters, placing the cluster with larger mass higher in the hierarchy. The figure also shows the PC-scores for the dominant principal components.
\end{example}

\begin{example}
The primary goal of this example is to illustrate how RiePCA can be used as a manifold learning technique for uncovering non-linear patterns in data. Here, the ambient manifold is $\real^3$ and the dataset consists of $2000$ points sampled uniformly from a torus in $\real^3$ with major radius $R=2$ and minor radius $r=0.8$, with additive white noise at level $0.05$; see Figure~\ref{fig:3a}. %Points are colored according to the angular parameter $\theta$ of the underlying torus. 
In this analysis, the probability measure attributes the same mass to all points. We select 25 reference points, sampling from a small neighborhood of local maxima of the heat-kernel density estimator restricted to the data points at $t=0.01$. Thus, the space of vector fields has dimension $25 \times 3 = 75$, yielding up to 75 principal directions. This is in sharp contrast with classical PCA that produces at most three principal components because the data points lie in $\real^3$. 

To facilitate visualization of the behavior of the principal components, Figure~\ref{fig:3a} shows the data colored by the longitudinal and meridional angles, respectively. Figure~\ref{fig:3b} shows RiePCA projections, at scale $t=2.75$, onto: (i) the PC1-PC2 plane that captures the global structure around the longitudinal circles; (ii) the PC3-PC4 plane that reveals the meridional circular structure.  Figure~\ref{fig:4} shows the PC scores for the first four principal components.
\begin{figure*}[htbp!]
    \centering
    \begin{subfigure}[t]{0.48\textwidth}
        \centering
        \includegraphics[width=\textwidth]{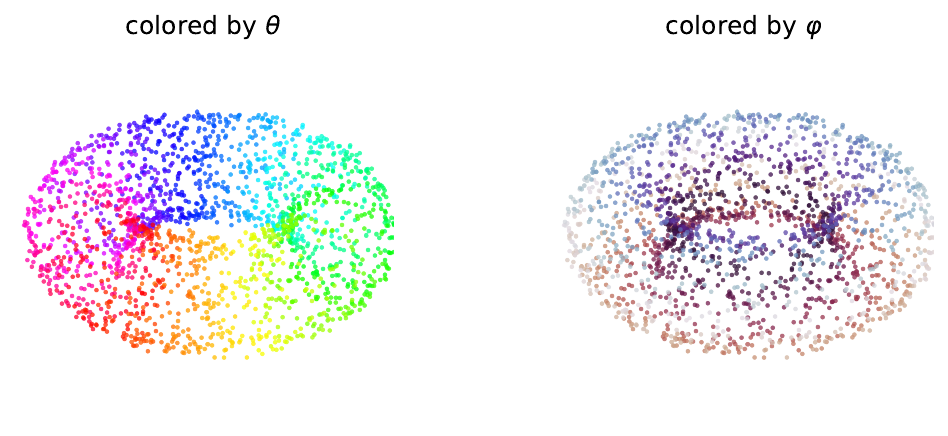}
        \caption{}
        \label{fig:3a}
    \end{subfigure}%
    \hfill
    \begin{subfigure}[t]{0.48\textwidth}
    \centering
    \includegraphics[width=\textwidth]{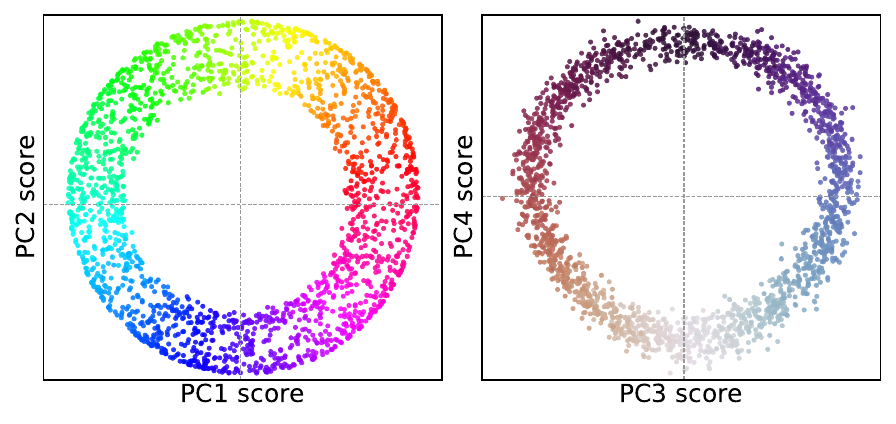}
    \caption{}
    \label{fig:3b}
    \end{subfigure}
\caption{(a) The torus dataset; (b) projection onto the first four principal directions at $t=2.75$.}
\end{figure*}
        \begin{figure*}[htbp!]
        \centering
        \includegraphics[width=0.8\textwidth]{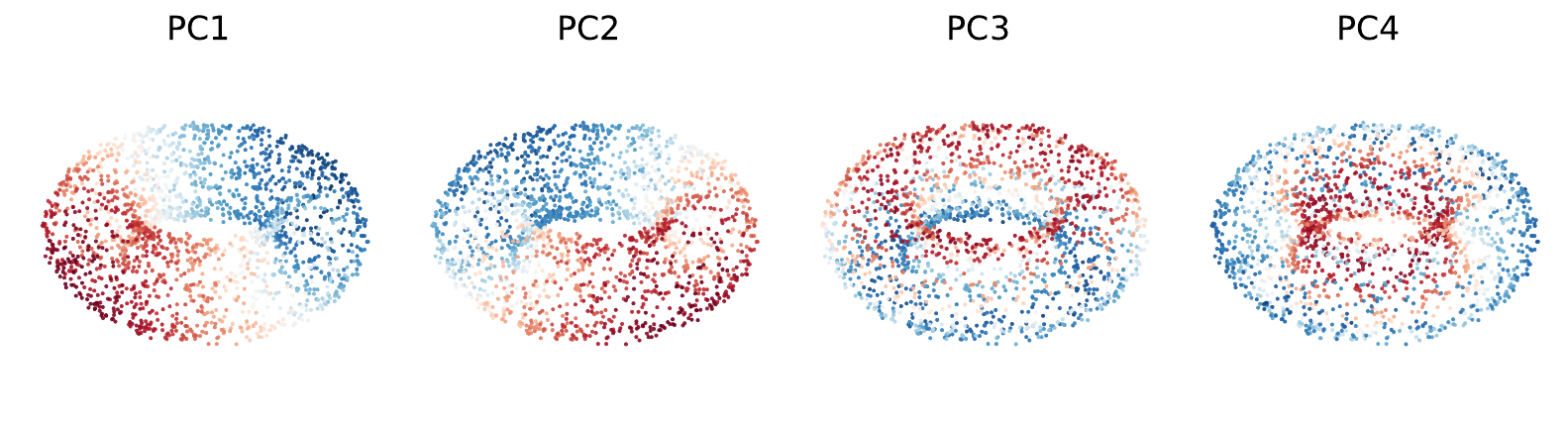}
        \caption{Heat map of PC-scores.}
        \label{fig:4}
\end{figure*}
  
It is instructive to compare our method with kernel PCA using the Euclidean heat kernel with various bandwidths $\sigma$. We observed that for the range $\sigma\in [0.7,2]$, the meridional circular structure first appears in the PC$5$-PC$6$ projection, whereas RiePCA detects the same structure earlier in the hierarchy, at PC3 and PC4. For larger bandwidths (such as $\sigma=4$), the meridional circle is not recovered in any of the consecutive pairwise PC projections from PC1–PC2 through PC5–PC6, likely because at those scales the circular structure becomes too small to be detected. These observations suggest that, for this dataset, RiePCA is more effective in learning the key geometric and topological features of the data.
\end{example}

%------------------
\section{Covariance and PCA on Graphs} \label{S:graphpca}

Let $G=(V,E,w)$ be a connected, weighted finite simple graph, where $V$ denotes the vertex set, $E$ denotes the edge set, and $w \colon E\to\real_{>0}$ assigns a positive real number to each edge that represents its geometric length. In the Riemannian formulation, the covariance of a probability measure on a manifold $M$ was defined by representing points in $M$ as vector fields induced by the heat kernel $k_t$. For a graph $G$, we define covariance with respect to more general non-negative potential functions $u \colon V \times V \to \real$, with the heat kernel remaining a potential of central interest. 

We fix a labeling $V=\{x_1, \ldots, x_n\}$ of the vertices of $G$ and a labeling $E=\{e_1, \ldots, e_m\}$ of the edges of $G$. Under these assumptions, we:
\begin{enumerate}[label={(\roman*)}]
\item identify a function $h \colon V \to \real$ with a vector $h \in \real^n$ whose $i$th coordinate is $h_i = h(x_i)$;
\item write a probability measure $\mu$ on $V$ as a vector $\mu \in \real^n$, whose $i$th entry is $\mu_i = \mu(x_i)$, satisfying $\mu_i \geq 0$ and $\sum_{i=1}^n \mu_i =1$;
\item represent a potential $u \colon V \times V \to \real$ as an $n \times n$ matrix $U = (u_{ij})$ whose $(i,j)$-entry is $u_{ij} = u(x_i,x_j)$;
\item encode the edge weights in an $m \times m$ diagonal matrix $W$ whose diagonal entries are $w_{\ell\ell} = w (e_\ell)$, $1 \leq \ell \leq m$;
\item associate with $G$ a weighted $n \times m$ incidence matrix $K=(k_{i\ell})$ whose rows are indexed by the vertices of $G$ and whose columns are indexed by the edges of $G$. Letting $w_\ell \coloneqq w(e_\ell)$, the $(i,\ell)$-entry of $K$ is
\begin{equation} \label{E:incidence}
k_{i\ell} \coloneqq
\begin{cases}
\frac{1}{w_\ell}, & \text{if $v_i$ is the target vertex of $e_\ell$;}\\
\frac{\!\!-1}{w_\ell}, & \text{if $v_i$ is the source vertex of $e_\ell$;}\\
\ 0, & \text{otherwise}.
\end{cases}
\end{equation}
\end{enumerate}

We also fix a reference orientation for each edge of the graph. We adopt the notation $\orient{E}$ for the oriented edge set with the same indexing as $E$. A vector field $v$ on $G$ is modeled as an assignment of an orientation and a magnitude for each edge. Once a reference orientation $\orient{E}$ has been fixed, a {\em vector field} can be represented by a map $v\colon \orient{E} \to \real$. For $e\in \orient{E}$, $v(e)>0$ indicates that the vector field has magnitude $v(e)$ and points in the same direction as the (fixed) orientation of $e$, whereas $v(e)< 0$ indicates an orientation opposite to that of $e$ and magnitude $|v(e)|$. The fixed orientations for the edges let us write a vector field $v$ as an $m$-vector whose $\ell$th entry is $v_\ell = v(e_\ell)$, $1 \leq \ell \leq m$.

We denote by $\vf_G$ the vector space of all vector fields on $G$ and equip it with the inner product
\begin{equation}
\innervf{v}{v'}
\coloneqq \sum_{\ell=1}^{m} w_\ell \,v_\ell v'_\ell,
\end{equation}
for all $v,v' \in \vf_G$, an expression that is clearly independent of the base orientation $\orient{E}$. We identify the inner product space $\big(\vf_G, \innervf{\,}{}\big)$ with $\real^m$ (with the usual dot product) under the linear mapping $\psi \colon \vf_G \to \real^m$ given by
\begin{equation} \label{E:iso}
\psi(v) \coloneqq W^{1/2} v = 
\begin{pmatrix}
\sqrt{w_1} v_1 \\
\vdots \\
\sqrt{w_m} v_m
\end{pmatrix}.
\end{equation}
It is simple to verify that $\psi$ is an isometry. Indeed, $\psi$ is clearly surjective and
\begin{equation}
\psi(v) \cdot \psi(v') = \sum_{\ell=1}^m (\sqrt{w_\ell} v_\ell)(\sqrt{w_\ell} v'_\ell) = \sum_{\ell=1}^m w_\ell v_\ell v'_\ell = \innervf{v}{v'},    
\end{equation}
for all $v, v' \in \vf(G)$.

To formulate a graph version of covariance, we employ the discrete gradient operator $\nabla$ that maps a function $h\colon V \to \real$ to the vector field $\nabla h \in \vf_G$ given on the oriented edge $e=(x_i,x_j) \in \orient{E}$ by
\begin{equation}
\nabla h(e)\coloneqq \frac{1}{w_\ell}(h_j-h_i).
\end{equation}
In coordinates, the discrete gradient operator $\nabla \colon \real^n \to \real^m$ is represented by the matrix $K^\top$; that is, $\nabla h= K^\top h$.

We now introduce the discrete analog (relative to the potential $u \colon V \times V \to \real$) of the vector fields $v_y$ of Definition \ref{D:cvt}. For each $x_j\in V$, let $u^j \colon V \to \real$ be the function given in vector notation by $u^j_i = u_{ij}=u(x_i,x_j)$. Define the vector field $v^j \in \vf_G$ by
\begin{equation}
v^j= \nabla u^j = K^\top u^j.
\end{equation}
Under the isomorphism defined in \eqref{E:iso}, we identify $v^j$ with the vector $\psi(v^j) = W^{1/2}K^\top u^j \in \real^m$.
 
\begin{definition}
Let $G=(V,E,w)$ be a weighted graph and $\mu$ be a probability measure on $V$. The {\em covariance matrix} $\Sigma_{\mu}$, relative to the potential $u \colon V \times V \to \real$, is defined as
\begin{equation*}
\begin{split}
    \Sigma_{\mu} &\coloneqq
    \sum_{j=1}^n W^{1/2}(v^j-\bar{v}) (v^j-\bar{v})^{\top} W^{1/2}\mu_j= W^{1/2} \Big(\sum_{j=1}^n (v^j-\bar{v})(v^j-\bar{v})^{\top} \mu_j\Big) W^{1/2},
\end{split}
\end{equation*}
where $\bar{v}=\sum_{j=1}^n v^j\mu_j$ is the {\em mean vector field}.
\end{definition} 

Having the covariance matrix defined, PCA can be done by diagonalizing $\Sigma_\mu$. The orthonormal eigenvectors of $\Sigma_\mu$ associated with the non-zero eigenvalues $\sigma^2_i$, under the isomorphism $\psi$,  yield principal vector fields $e_i$ that determine the principal directions of variation of $\mu$. The PC score of a vertex $x_j \in V$ along the $i$th principal direction is given by $\innervf{v^j}{e_i}$. As usual, we label the eigenvalues in non-increasing order and $\sigma_i^2$ gives the variance of the data projected onto the $i$th principal direction.

\begin{remark} 
Relabeling the vertices of $G$ has no effect on the covariance matrix, whereas relabeling the edges induces a permutation of the coordinates of the eigenvectors of $\Sigma_\mu$. However, as expected, the principal vector fields remain unchanged. As for changes in edge orientation, let $\Sigma_\mu$ and $\Sigma'_\mu$ be the covariance matrices of $\mu$ relative to two different choices of orientations for the edges of $G$. Define $S$ to be the diagonal $m \times m$ whose diagonal entries are $s_{\ell\ell} = 1$ if the orientations of the edge $e_\ell$ agree and $s_{\ell\ell} = -1$, otherwise. Clearly, $S^2 = I_m$. Then, the corresponding incidence matrices $K$ and $K'$, defined in \eqref{E:incidence}, satisfy $K'=KS$. Using this, one can verify that $\Sigma'_\mu = S\Sigma_\mu S=S^{-1} \Sigma_\mu S$, which implies that the eigenvalues of $\Sigma_{\mu}$ and $\Sigma'_\mu$ are the same and $v$ is an eigenvector of $\Sigma_\mu$ if and only if $Sv$ is an eigenvector of $\Sigma'_\mu$. Thus, only the representation in Euclidean coordinates of the principal vector fields changes, not the vector fields themselves.
\end{remark}

%-----------------

Algorithmically, one could approach GraphPCA through the Gram matrix, as in Algorithm \ref{A:riepca} for RiePCA. However, since the covariance matrix can be calculated for GraphPCA, we have the option of diagonalizing the covariance matrix, as described in the next algorithm. In RiePCA, if $M$ is high-dimensional, it may be computationally expensive to calculate covariance matrices.

\begin{algorithm} \label{A:graphpca}
(GraphPCA Algorithm) 

\begin{enumerate}%[label={(\roman*)}]
    \item Calculate the vector fields $v^j \coloneqq v^{x_j}= K^\top u(-,x_j)$, $1 \leq j \leq n$.
    \item Center the vector fields: $v^j \mapsto v^j - \bar{v}=
    v^j - \sum_{j=1}^n v^j\mu_j$.
    \item Compute the eigenvalues and eigenvectors of $\Sigma_\mu$ by applying SVD to the matrix $W^{1/2}\Lambda D_{\mu}^{1/2}$, where $\Lambda$ is a $m\times n$ matrix whose $(i,j)$-entry is $\lambda_{ij}=v^j_i$. The $i$th eigenvalue of $\Sigma_{\mu}$ is the square of the $i$th singular value of $W^{1/2}\Lambda D_{\mu}^{1/2}$ and the $i$th eigenvector of $\Sigma_{\mu}$ is the $i$th left singular vector of $W^{1/2}\Lambda D_{\mu}^{1/2}$.
    \item For $1\leq i\leq n$, the $i$th PC-score of a vertex $x_j$ is $W^{1/2}(v^j-\bar{v})\cdot e^i$, where $e^i$ is the $i$th eigenvector of $\Sigma_{\mu}$.
\end{enumerate}
\end{algorithm}
%-----------------
\begin{example}
This example illustrates GraphPCA applied to a discrete representation of the uniform distribution (normalized arc-length measure) on the unit circle. The underlying graph is a cycle with 200 vertices placed on the circle at angles $\theta_j = \pi j/100$, $0 \leq j \leq 199$, with neighboring vertices connected by edges of uniform weight $w=\pi/100$, the geodesic distance between neighboring vertices. The distribution $\mu$ attributes each vertex a probability mass $\mu_j=1/200$. We construct an affinity-based graph Laplacian with edge affinities given by a Gaussian of bandwidth $\sigma=1$ and take the associated heat kernel $k_t$ as the potential function. 
%--------------
\begin{figure*}[htbp!]
\centering
\begin{subfigure}[t]{0.5\textwidth}
\centering
\includegraphics[width=\textwidth]{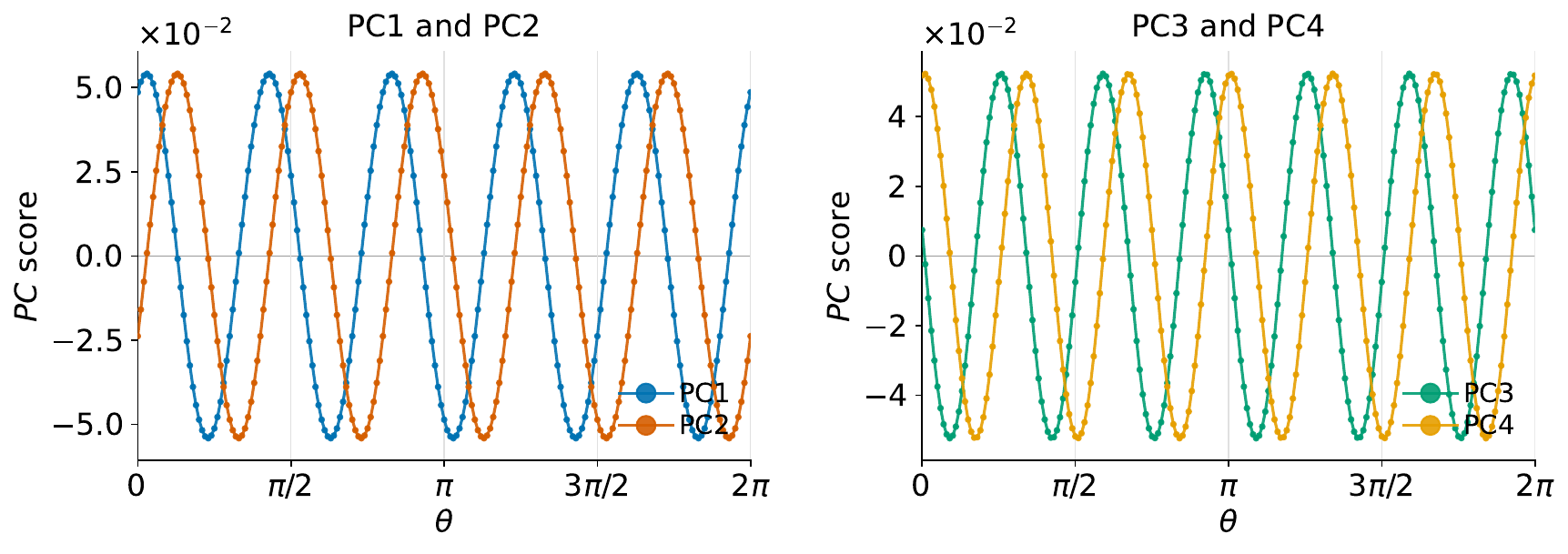}
\caption{$t=20$}
\label{fig:oscillations_t_20}
\end{subfigure}%
\hfill
\begin{subfigure}[t]{0.5\textwidth}
\centering
\includegraphics[width=\textwidth]{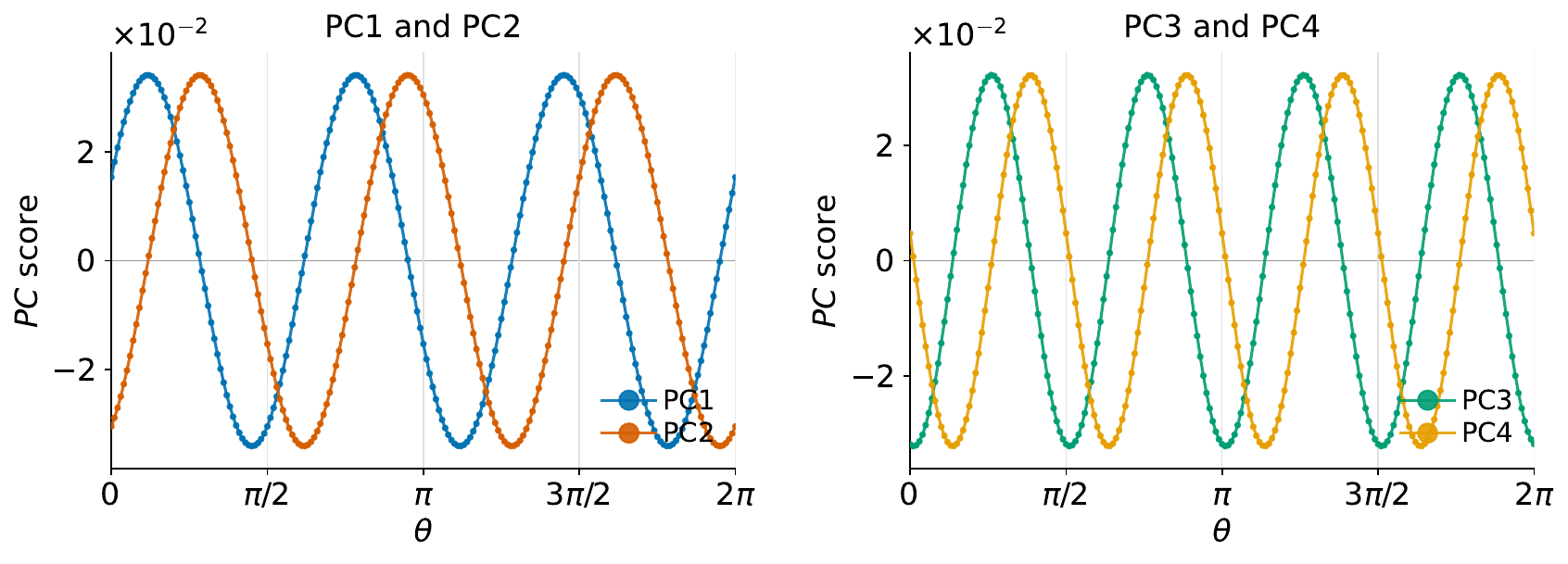}
\caption{$t=50$}
\label{fig:oscillations_t_50}
\end{subfigure}
\begin{subfigure}[t]{0.5\textwidth}
\centering
\includegraphics[width=\textwidth]{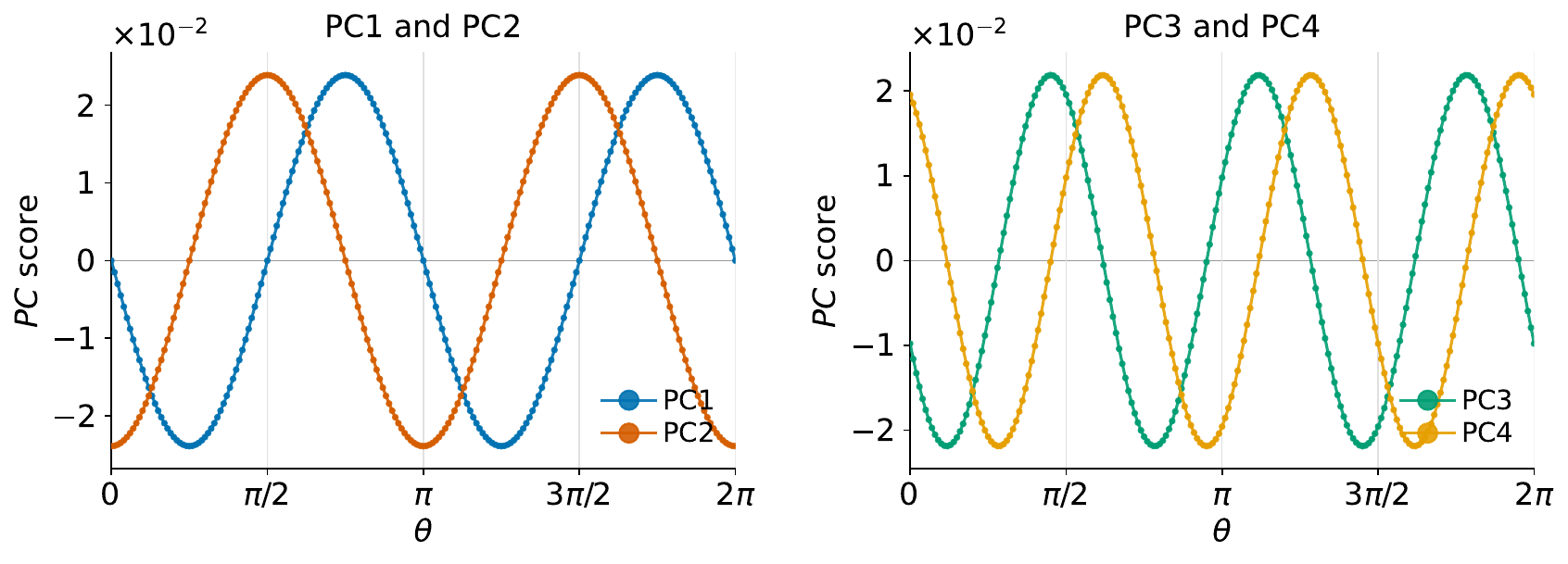}
\caption{$t=100$}
\label{fig:oscillations_t_100}
\end{subfigure}%
\hfill
\begin{subfigure}[t]{0.5\textwidth}
\centering
\includegraphics[width=\textwidth]{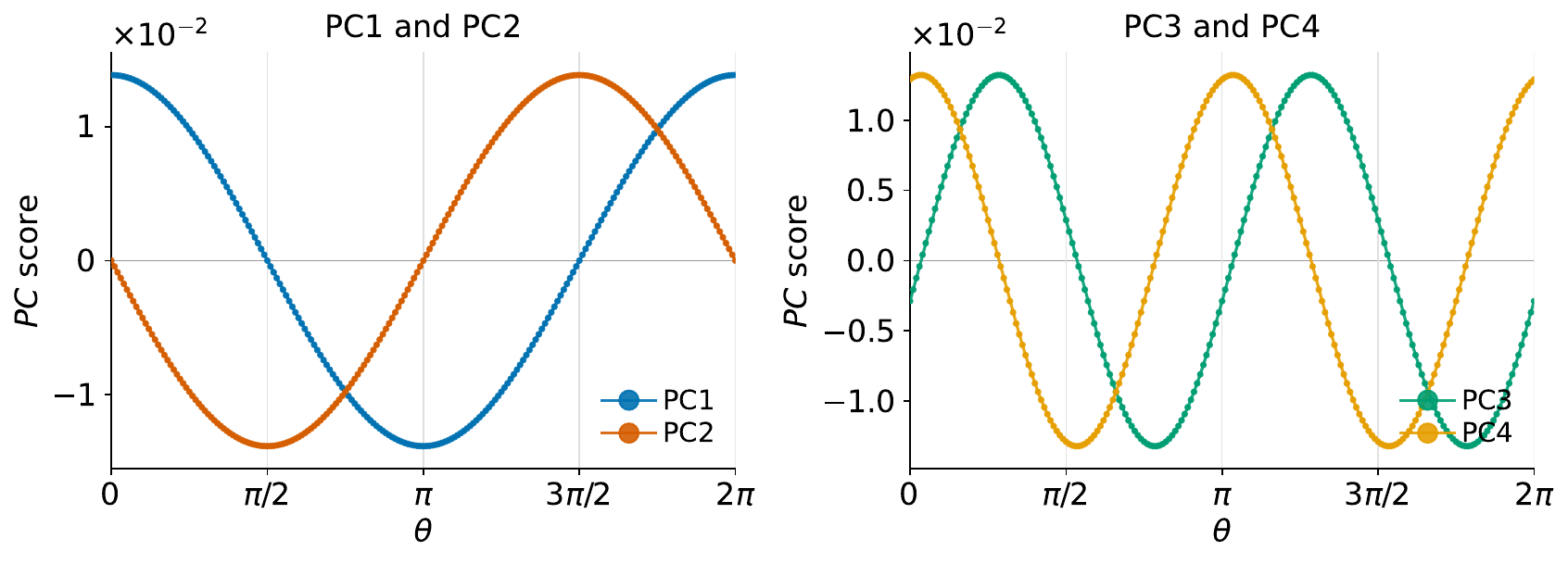}
\caption{$t=250$}
\label{fig:oscillations_t_250}
\end{subfigure}
\caption{PC scores, at different scales $t>0$, for the first four principal components of a discrete representation of the uniform distribution on the unit circle.}
\label{fig:pc_scores}
\end{figure*}
%------------------
Figure~\ref{fig:pc_scores} shows plots of the PC-scores of the vertices of the graph for the first four principal components, at different scales, as a function of the angle $\theta_j$. The graph is drawn as a continuous curve to facilitate visualization. We observe a sinusoidal behavior with frequencies that depend on the scale parameter $t$. At larger scales, they resemble the usual sine and cosine eigenfunctions of the Laplacian.
\end{example}

%---------------------------------

\section{Numerical Experiments} \label{S:numerics}

The main objective of the proof-of-concept experiments presented in this section is to illustrate ways in which RiePCA and GraphPCA can be used in data analysis. More thorough experimentation falls outside the scope of this paper. 

The code for the experiments in this section is provided on our GitHub repositories\footnote{\url{https://github.com/ajmaths/riepca}}\footnote{\url{https://github.com/ajmaths/graphpca}}. These repositories also include the supporting Python modules for RiePCA and GraphPCA used in the experiments.

\subsection{Shape Ordination and Variation: ModelNet10 Chairs}

The ModelNet10 dataset is a collection of 3D CAD models for 10 categories of objects presented as pre-aligned triangular meshes \cite{wu2015}. We apply GraphPCA to the category of chairs, comprising $n=989$ meshes, to obtain a low-dimensional ordination of the objects and to uncover and visualize the main patterns of shape variation. We sample $N=5000$ points from each mesh independently and uniformly with respect to the surface area and let $X_i=\{x_{ij} \colon 1 \leq j \leq N\} \subseteq \real^3$, $1 \leq i \leq n$, denote the resulting point clouds. To focus on shape, not position or absolute size of the objects,  we center and rescale each point cloud $X_i$ by first shifting its center to the origin and then normalizing size so that the maximum distance of a point to the origin is $1$. In other words, letting $r_i = \max_{1 \leq j \leq N} \|x_{ij}\|$, each point $x_{ij}$ is replaced with $ x_{ij}/r_i$.

We compute (the Monte Carlo approximation to) the sliced 2-Wasserstein distance \cite{rowland2019orthogonal} between each pair of point clouds and construct the $10$-nearest neighbor graph whose vertices represent the chairs and edge weights are the corresponding sliced 2-Wasserstein distances. Moreover, we equip the vertex set with the uniform probability measure, with each vertex having probability mass $1/n$. 

We apply GraphPCA using the heat kernel at diffusion time $t=10$ as the potential function. Figure \ref{fig:chairs} shows the 2D ordination of the data obtained from the PC1 and PC2 scores. Visual inspection of the figure suggests that a key feature captured by PC1, from left to right, is the decreasing bulkiness of the seat and base relative to the rest of the chair. Along PC2, from top to bottom, the chairs tend to gradually vary from tall with a narrow design to short relative to the width.
%----------------------
\begin{figure*}[htbp!]
\centering
\begin{subfigure}[t]{0.47\textwidth}
\centering
\includegraphics[width=\textwidth]{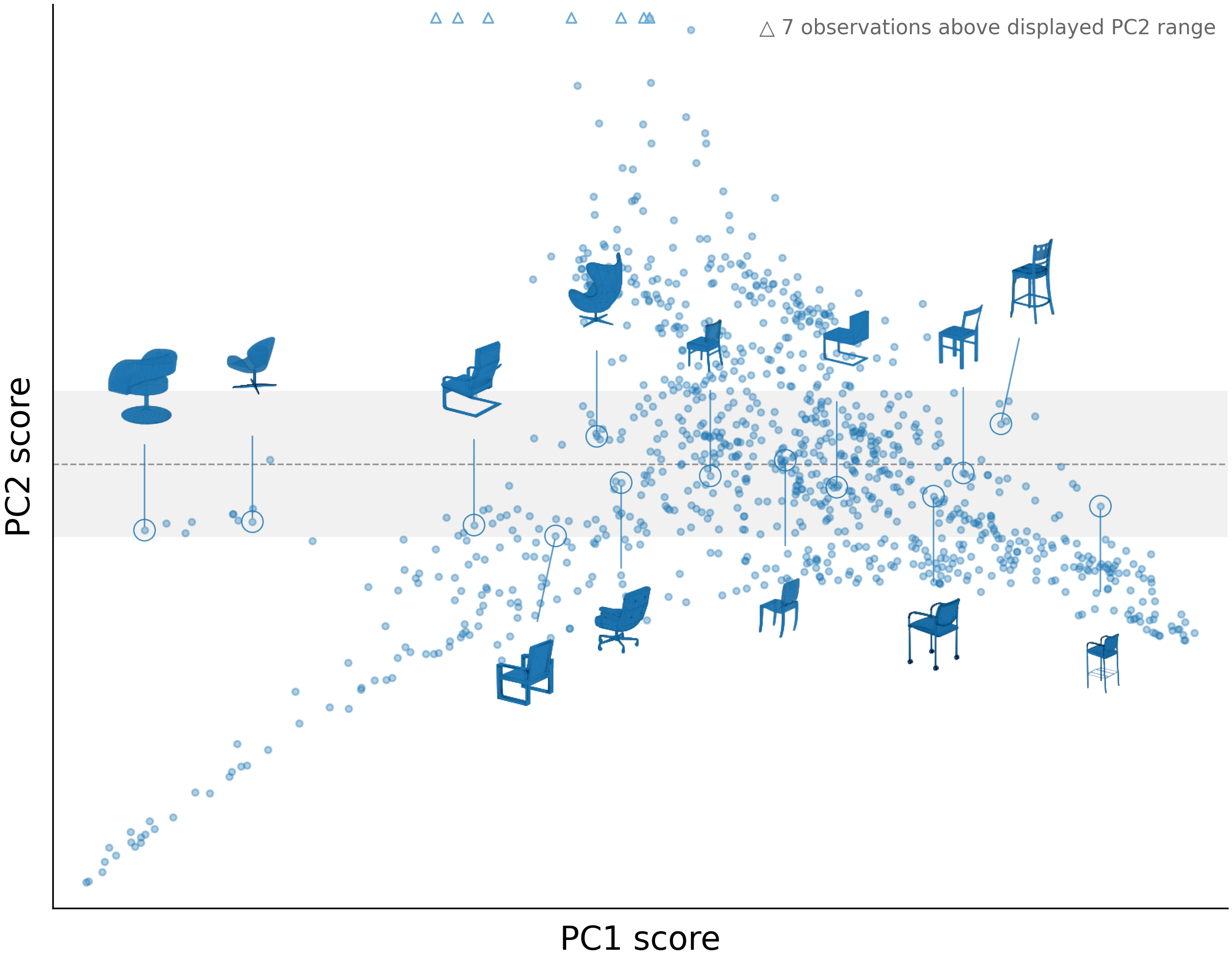}
\end{subfigure}%
\hfill
\begin{subfigure}[t]{0.47\textwidth}
\centering
\includegraphics[width=\textwidth]{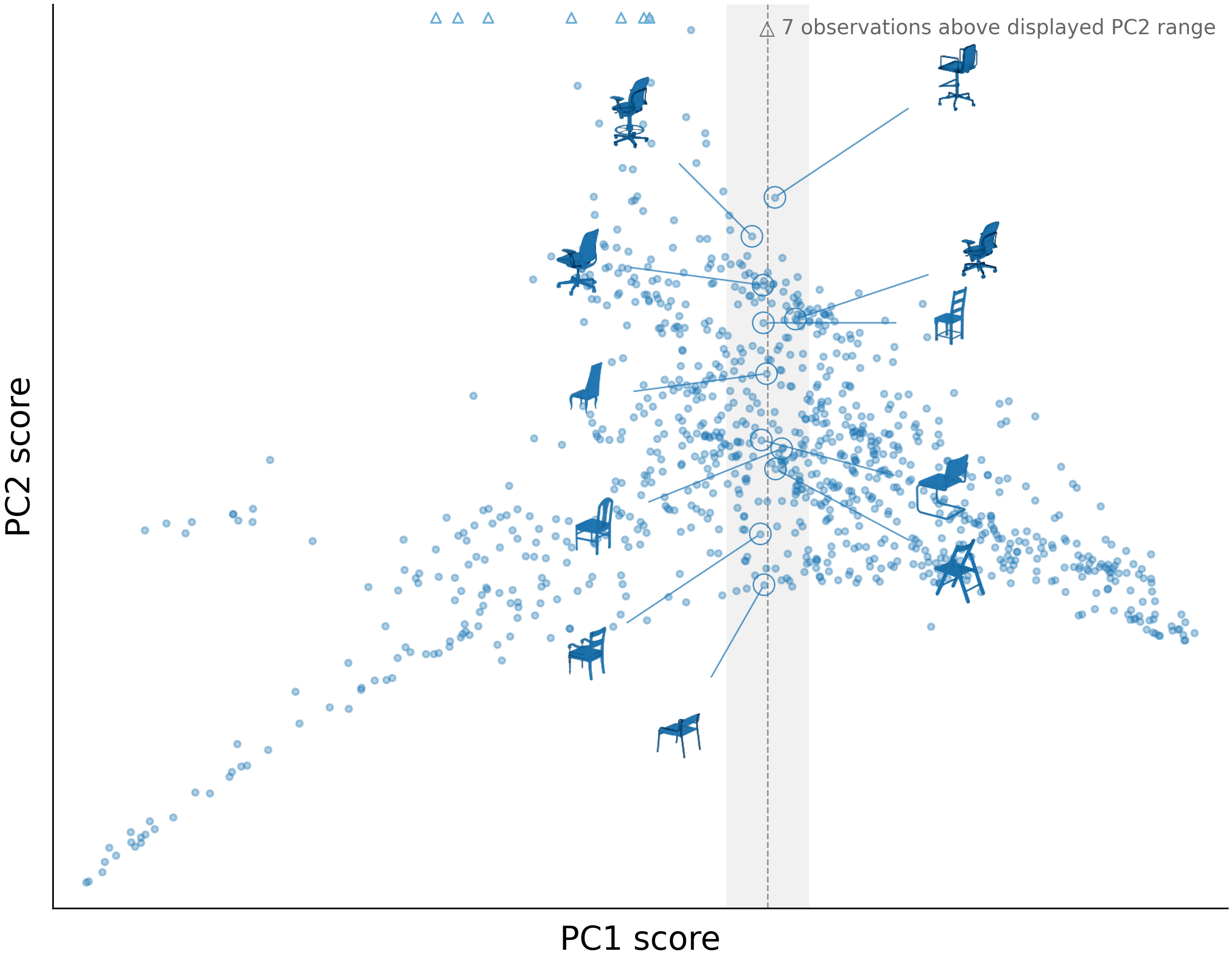}
\end{subfigure}
\caption{Variation in chair shape along PC1 and PC2. Representative chairs were selected from the highlighted bands for visualization of the patterns of shape variation captured by PC1 and PC2. ({\em Note: seven data points fall outside the displayed PC2 range.})}
\label{fig:chairs}
\end{figure*}

%--------------------------

\subsection{Motor Imagery EEG}

Motor imagery electroencephalography (EEG) measures brain waves as a person mentally imagines a movement without physical execution. Among others, applications include neuro-rehabilitation, assistive device control, and human-computer interaction. The BNCI 2014-001 motor-imagery dataset \cite{nemar_nm000139} contains EEG recordings for nine subjects collected in two sessions on different days. During each trial, $n=22$ electrodes simultaneously record time series at a sampling rate of $250$ Hz for one of four motor-imagery tasks: left hand, right hand, both feet, or tongue. Each trial lasts $6$ seconds, and the analyzed epoch is the $4$-second interval from $2$ to $6$ seconds (including both endpoints). Hence, the number of time points for each time series is $T=4\times 250+1=1001$. In this proof-of-concept application of RiePCA, we restrict the analysis to EEG time series for the left-hand and right-hand tasks for a single subject (Subject 1). To focus the analysis on Beta brain waves, we apply a band-pass filter between $13$ and $30$ Hz to all time series. As each subject contributes $72$ trials per task and per session, the data comprise 288 trials (288 sets of 22 time series). Figure \ref{fig:eeg} shows the 22 filtered EEG signals for one of the trials.
%--------------
\begin{figure}[ht!]
    \centering
    \includegraphics[width=0.85\textwidth]{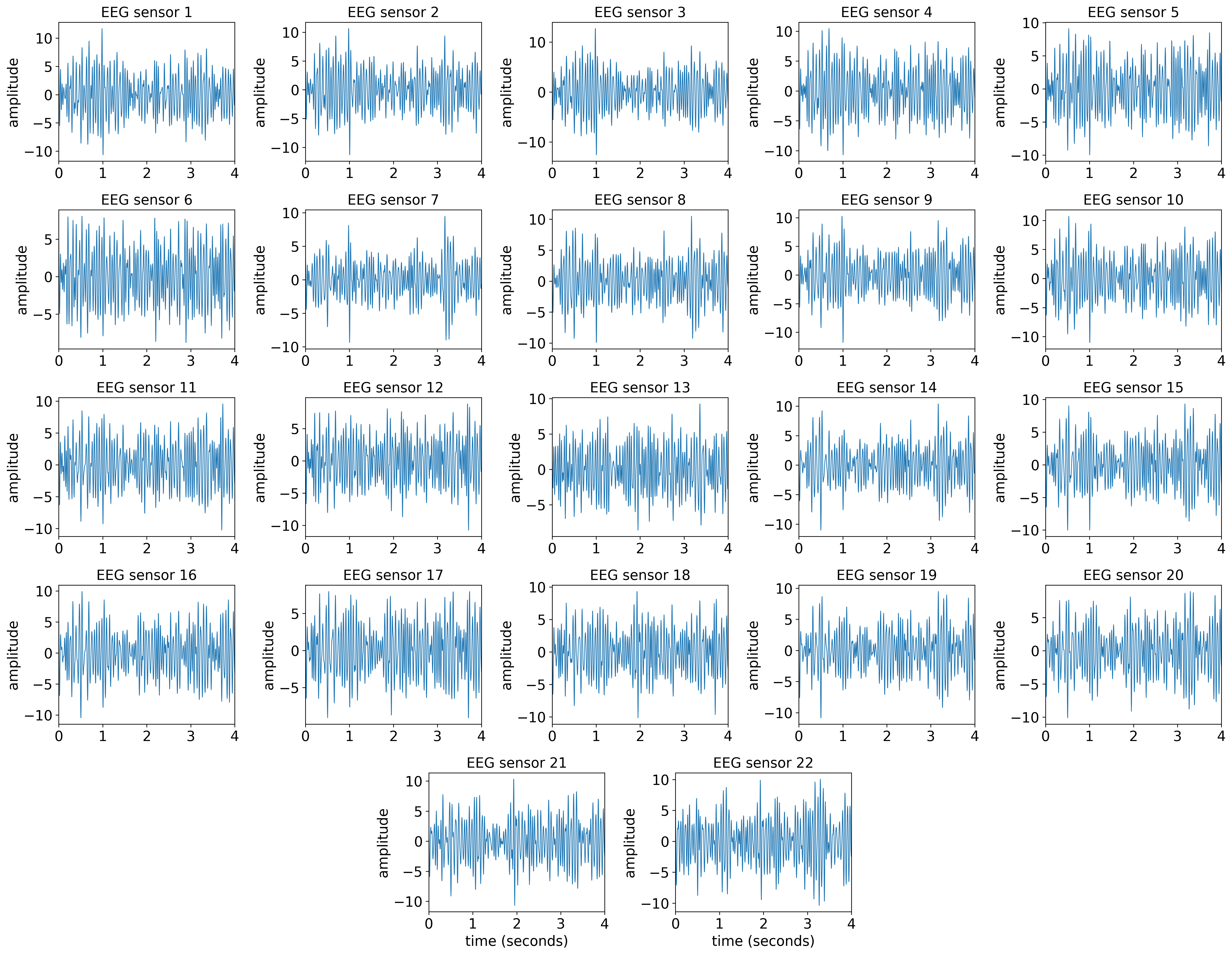}
    \caption{Filtered EEG time series recorded by 22 sensors at a unique trial for Subject 1.}
    \label{fig:eeg}
\end{figure}

For the $i$th trial, let $E_i$ be the $n\times T$ matrix ($n=22$, $T=1001$) encoding the data. We denote $X_i$ as the centered matrix obtained from $E_i$ by subtracting the temporal mean for each sensor. In other words, for sensor $1 \leq c \leq n$ and time point $1\leq t \leq T$, the $(c,t)$-entry of $X_i$ is given by
\begin{equation}
X_i(c,t)={E_i}(c,t)-\frac{1}{T}\sum\limits_{t=1}^{T}E_i(c,t).    
\end{equation}
Analysis of the temporal means shows that they are ineffective in discriminating the two tasks, so we represent each trial by the spatial covariance matrix $S_i=X_iX_i^\top/T$, which is positive semi-definite but may have zero eigenvalues. For this reason, we apply the Oracle Approximating Shrinkage (OAS) method of~\cite{chen2010shrinkage} to $S_i$ to obtain a non-singular matrix $Y_i \in \operatorname{SPD}(n)$, namely,
\begin{equation}
Y_i=(1-\alpha_i)S_i+\alpha_i
\frac{\operatorname{tr} S_i}{22}I,
\end{equation}
where $\operatorname{SPD}(n)$ is the space of all symmetric, positive-definite $n\times n$ matrices and $\alpha_i$ is determined adaptively by the OAS estimator. For the single subject analyzed in this experiment, the shrinkage coefficients $\alpha_i$ are small, falling in the range $[0.002367, 0.003302]$, with a median of $0.002701$ and a mean of $0.002697$.

The manifold $\operatorname{SPD}(n)$, $n=22$, has dimension $q={22\times 21}/{2}+22=253$. We equip $\operatorname{SPD}(n)$ with the affine-invariant Riemannian structure whose inner product at $X\in \operatorname{SPD}(n)$ is given by
\begin{equation}
\inner{U}{V}_X=\operatorname{tr}(X^{-1}VX^{-1}U),
\end{equation}
for all $U,V \in T_X (\operatorname{SPD}(n))$. The induced geodesic distance is 
\begin{equation}
d_g(X,Y)=\|\log(X^{-1/2}YX^{-1/2}) \|,
\end{equation}
where $\log$ denotes the matrix logarithm and $\| \cdot\|$ is the Frobenius norm (cf.~\cite{pennec2020manifold}). The potential function used in the RiePCA analysis is the heat-kernel proxy
\begin{equation}
k_t(X,Y)=(4\pi t)^{-q/2}\exp{(-d_g(X,Y)^2/(4t))},
\end{equation}
for all $X,Y\in \operatorname{SPD}(n)$. In our computations, we omit the factor $(4\pi t)^{-q/2}$ because it simply scales the data representation. Dropping this factor rescales the covariance matrix (hence its eigenvalues), but does not affect the normalized principal vector fields.

To obtain a finite set of points anchoring the vector fields, we take the convolution of the empirical measure $\mu_N$ associated with $\{Y_i \colon 1 \leq i \leq N\} \subseteq \operatorname{SPD}(n)$ with the kernel $k_t$, at a fixed scale $t=t_0\coloneqq 1/4 \operatorname{Median}_{i<j} d_g(Y_i,Y_j)^2\approx 2.21$, and select the local maxima among the data points. A point is viewed as a local maximum if it maximizes the function among its $k=5$ nearest neighbors. In this experiment, we use the same anchor points across all scales.

To calculate the gradient vector fields $v_{Y_i} = \nabla_X k_t (X,Y_i)$, note that
\begin{equation}
\nabla_X\left(\frac{d_g(X,Y)^2}{2}\right)=-\operatorname{log}_X Y,
\end{equation}
the Riemannian version of $\nabla_x\|x-y\|^2/2=-(y-x)$ in Euclidean space. Hence, 
\begin{equation}
\begin{split}
    \nabla_X k_t(X,Y_i)&=\nabla_X\exp\!\left(-\frac{d_g(X,Y_i)^2}{4t}\right)\\
    &=\exp\!\left(-\frac{d_g(X,Y_i)^2} {4t}\right)\nabla_X\left(-\frac{d_g(X,Y_i)^2}{4t}\right) \\
    &=\frac{1}{2t}\exp\!\left(-\frac{d_g(X,Y_i)^2}{4t}\right)\operatorname{log}_X Y_i. 
\end{split}
\end{equation}
%----------------
In particular, this ensures that $k_t$ satisfies \eqref{E:gradu}. With the above ingredients in place, we apply RiePCA, as described in Algorithm \ref{A:riepca}, to analyze the dataset $\{Y_i \colon 1 \leq i \leq N\}$. 

In the first experiment, we use all data points ($N=288$); that is, EEG recordings of Subject 1 for both left-hand and right-hand tasks and from both sessions.
%----------------
\begin{figure}[htbp!]
    \centering
    \includegraphics[width=\linewidth]{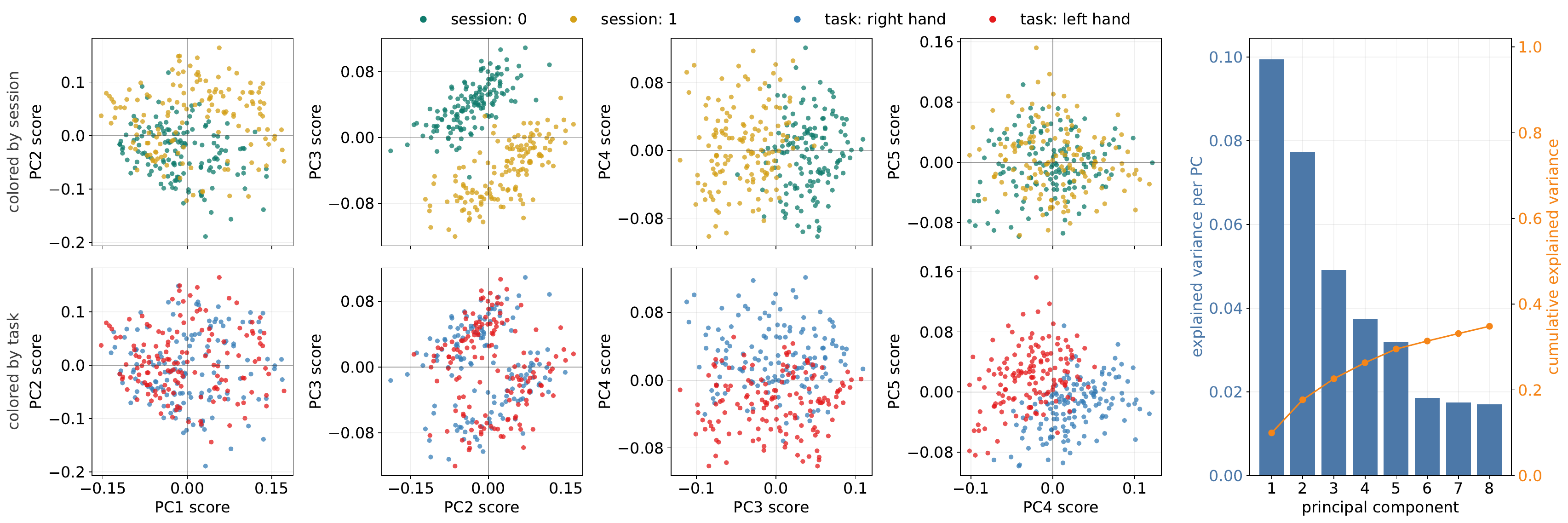}
    \caption{PC-scores for the first 5 principal directions and variance explained by the first eight principal components when $t=3t_0$. The first row is colored by session (yellow and green) and the second row by task performed (red and blue).}
    \label{fig:eeg}
\end{figure}
%-----------------------
Figure~\ref{fig:eeg} shows projections of the dataset onto the $\text{PC}(i)$-$\text{PC}(i+1)$ planes for $i=1,2,3,4$. The plots on the first row are colored by session (yellow and green), whereas those on the second row are colored by task (red and blue). The PC2-PC3 map shows sharp differences detected across sessions, with the individual sessions forming well-defined clusters. The PC4-PC5 map, in turn, suggests the hypothesis that there are signal components that can discern the tasks performed and are independent of the sessions. From the viewpoint of the PC4 and PC5 scores, the sessions are hardly discernible, while the two tasks roughly fall into distinct half-planes. Such signal components are of central interest in applications. 

To test this hypothesis, we repeated the experiment using the recordings from session 0 ($N=144$) as training data, saving the time series from the other session as test data. Having the effect of sessions removed from the data, as shown in Figure~\ref{fig:eeg_1_session}, PC3 and PC4 seem to best discern the two tasks. 
%-----------------------------
\begin{figure}[htbp!]
    \centering
    \includegraphics[width=\linewidth]{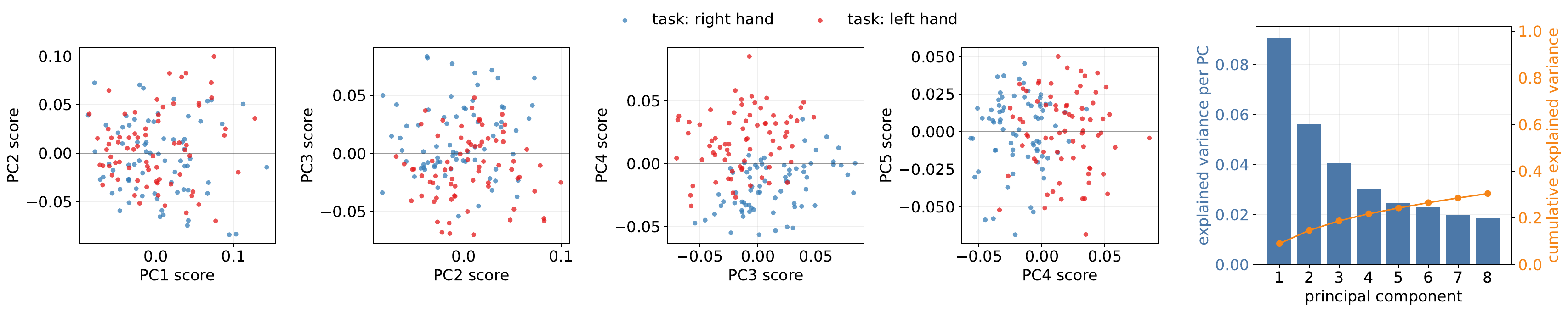}
    \caption{PC-scores for the test session (session 1) and variance explained by the principal components obtained from the training session (session 0).}
    \label{fig:eeg_1_session}
\end{figure}
%-------------------
To determine whether this pattern generalizes to the test session, we trained a linear Support Vector Machine (SVM) using the PC3 and PC4 scores from the training session. To evaluate the model, we projected the covariance matrices from the test session onto the PC3-PC4 plane and obtained $89.58\%$ classification accuracy through the trained SVM, a result that supports the hypothesis at least for the subject analyzed.

%-----------------------

\section{Summary and Discussion} \label{S:summary}

We have developed multi-scale notions of covariance tensor and principal component analysis for probability measures on Riemannian manifolds of bounded geometry and for distributions on the vertex sets of weighted simple graphs. Departing from the classical formulation anchored at the Fr\'{e}chet mean, our approach measures variation about every point of the underlying space with no restrictions on the probability measures. This is achieved by representing each point as a vector field derived from the gradient of the heat kernel, which plays the role of a smooth, multi-scale potential and is intrinsic to the underlying geometry via the Laplace-Beltrami operator. The resulting map of the manifold into the Hilbert space of $L_2$ vector fields was shown to be an actual embedding, both for closed manifolds and, in the Appendix, for non-compact manifolds of bounded geometry. Within this Hilbert space, covariance can be defined as in the Euclidean setting: the associated operator is positive semi-definite, self-adjoint, and trace-class, so that PCA carries over nearly verbatim, with the notable distinction that one typically obtains infinitely many mutually uncorrelated principal vector fields, capable of capturing highly nonlinear patterns of variation. The bounded geometry assumption allows for a streamlined presentation of the results but can be relaxed. The hypotheses actually needed are a lower bound on the Ricci curvature, an upper bound on the sectional curvature, and a positive injectivity radius, all of which are implied by bounded geometry.

We developed two computational models. RiePCA is a reduction of the theoretical model that restricts vector fields to finitely many anchor points; for example, isolated local maxima of the heat-kernel density estimator. The RiePCA model applies to any Riemannian manifold (we can drop the completeness and bounded geometry assumptions) and smooth non-negative potential functions more general than the heat kernel. GraphPCA provides a discrete counterpart in which vector fields assign orientations and magnitudes to edges, yielding efficient computations on sparse graphs and an effective approach to manifold data when densely sampling vector fields is infeasible. Proof-of-concept experiments demonstrated applications of these tools: low-dimensional shape ordination and analysis of motor-imagery EEG on the manifold of symmetric, positive-definite matrices.

A recurring theme is the multi-scale nature of the model. Structural persistence and phenomena such as phase transitions across scales remain open questions. Further directions include the spectral analysis of the covariance eigenvalue decay and the stability of the associated eigenspace flags; quantitative comparisons on larger benchmark datasets; and statistical inference based on principal vector fields. A related direction is the analysis of cross-correlations between random variables taking values on possibly distinct Riemannian manifolds or vertex sets of graphs, using techniques analogous to those developed here.

%-----------------------

\section*{Acknowledgements}
Part of this work was completed while WM was a visitor at Universidade Federal de S\~{a}o Carlos 
(UFSCar), Brazil, supported in part by FAPESP grant 2022/16455-6. LH is partially supported by the 
same grant.

The authors acknowledge the use of large language models for assistance with coding, including aspects of the numerical examples and experiments.

%------------------------
\bibliographystyle{abbrv}
\bibliography{ref}

\appendix
\section{Appendix}

Unless stated otherwise, throughout the Appendix we assume that $(M,g)$ is a $d$-dimensional Riemannian manifold, $d \geq 1$, that is connected, complete and of bounded geometry.

%-----------

\subsection{An Integral Estimate}
\label{S:int-estimate}

\begin{lemma}\label{L:convergence}
Given $c>0$, there exists a constant $C>0$ such that 
\begin{equation*}
\int_M e^{-\frac{d^2(x,y)}{c}} dvol (x) \leq C,
\end{equation*}
for all $y \in M$.
\end{lemma}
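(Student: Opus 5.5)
The plan is to use the layer-cake (shell) decomposition around $y$ together with the Bishop--Gromov volume comparison. Since $M$ has bounded geometry, the Ricci curvature satisfies $\Ric_M \geq -K(d-1)$ for some $K \geq 0$ independent of the point. Write $B(y,r)$ for the geodesic ball and $V_K(r)$ for the volume of a ball of radius $r$ in the simply connected space form of constant curvature $-K$ and dimension $d$ (Euclidean space if $K=0$). Bishop--Gromov, valid on complete manifolds with this Ricci lower bound, gives
\[
vol(B(y,r)) \leq V_K(r)
\]
for every $y \in M$ and $r>0$. The key point is that this bound is uniform in $y$.

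Next split $M$ into the annuli $A_k = B(y,k+1)\setminus B(y,k)$, $k \geq 0$, which cover $M$. On $A_k$ we have $d(x,y) \geq k$, so
\[
\int_M e^{-\frac{d^2(x,y)}{c}}\, dvol(x) \leq \sum_{k=0}^\infty e^{-k^2/c}\, vol(A_k) \leq \sum_{k=0}^\infty e^{-k^2/c}\, V_K(k+1).
\]
The growth bound $V_K(r) \leq \omega_{d-1}\int_0^r \big(\sinh(\sqrt{K}s)/\sqrt{K}\big)^{d-1} ds \leq C' e^{(d-1)\sqrt{K} r}$ (or a polynomial in $r$ when $K=0$) shows that the terms grow at most exponentially in $k$. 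The Gaussian factor $e^{-k^2/c}$ beats this, so the series converges to a finite constant $C$ depending only on $c$, $d$, and $K$, not on $y$. Alternatively, one could use the coarea form $\int_0^\infty e^{-r^2/c}\, d\,vol(B(y,r))$ with an integration by parts; the discrete shell version avoids regularity issues.

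The only substantive step is the uniform volume bound. The main point to be careful about is that Bishop--Gromov requires completeness and only the Ricci lower bound. Both are part of the standing hypotheses, so the injectivity radius plays no role here. Everything else is a routine comparison of a Gaussian against exponential growth.
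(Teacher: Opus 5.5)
Your argument is correct and is essentially the paper's: decompose $M$ into shells about $y$, bound the shell volumes uniformly in $y$ via Bishop--Gromov under $\Ric_M \geq -K(d-1)$, and let the Gaussian factor dominate the at most exponential volume growth. The only difference is that you use unit-width shells with the absolute comparison $vol(B(y,r)) \leq V_K(r)$ at every radius, whereas the paper uses dyadic shells and iterates a doubling-type relative comparison down to $vol(B(y,1))$. Your route is slightly more direct and bypasses that doubling inequality: as stated in the paper, with factor $2e^{\sqrt{K}r}$, it already fails in $\mathbb{R}^d$ for $d \geq 2$. A correct factor is $2^d e^{(d-1)\sqrt{K}r}$, and the paper's convergence argument still goes through with it.
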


\begin{proof}
Fix $y\in M$ and write
\begin{equation}
	A_0=B(y,1)
    \quad \text{and} \quad
	A_k=B(y,2^{k})\setminus B(y,2^{k-1}),
	\text{\ for $k\geq 1$}.
\end{equation}
Then,
\begin{equation} 
	\int_M e^{-\frac{d(x,y)^2}{c}} dvol(x) 	= 	\sum_{k=0}^\infty	\int_{A_k}	e^{-\frac{d^2(x,y)}{c}} dvol(x).
\end{equation} 
	Since $d(x,y)\ge 2^{k-1}$ for all $x\in A_k$, $k\ge1$, we obtain
	\begin{equation}
	\int_{A_k}	e^{-\frac{d(x,y)^2}{c}}\,dvol(x) 	\leq	e^{-\frac{4^{k-1}}{c}}	vol(B(y,2^k)).
	\end{equation}
 Let $K\geq 0$ be such that $\Ric_M \geq -(d-1)K$. The Bishop-Gromov Comparison Theorem (see \cite[Theorem 1.3]{SY1}), together with an explicit estimate of the volume ratio in the model space of constant curvature
$-K$, ensures the following estimate
\begin{equation}\label{E:pdoubling}
vol (B(y;2r)) \leq 2  e^{\sqrt{K} r} vol(B(y;r)),
\end{equation}
for all $x \in M$ and $r>0$. This implies that
\begin{equation}
\begin{split}
	vol(B(y,2^k)) 	&\leq 2 e^{\sqrt{K}2^{k-1}}	vol(B(y,2^{k-1}))\\
	&\leq	2^2 e^{\sqrt{K}(2^{k-1}+2^{k-2})}	vol(B(y,2^{k-2}))\\
	&\;\,\vdots\\
	&\leq 2^ke^{\sqrt{K}(2^{k-1}+\ldots+1)}vol(B(y,1)).
\end{split}
\end{equation}
Therefore,
	\begin{equation*}
	vol(B(y,2^k))	\leq	2^k e^{\sqrt{K} (2^k-1)} vol(B(y,1)).
	\end{equation*}
 Again, by the Bishop-Gromov Comparison Theorem,
\begin{equation}
	vol (B(y,1)) \leq \omega_d \int_0^1 (a_K (s))^{d-1} ds \leq C_1,
\end{equation}
where $\omega_d = \pi^{d/2}/\Gamma\left(\frac{d}{2} + 1\right)$ is the volume of the unit ball in $\mathbb{R}^d$,
\begin{equation}
	a_K (s) =
\begin{cases}
	s, & \text{if $K=0$,} \\
	\frac{1}{\sqrt{K}} \sinh\left(\sqrt{K} s\right), & \text{if $K>0$},
\end{cases}
\end{equation}
and $C_1>0$ is a constant independent of $y$. Combined, the above inequalities imply that
\begin{equation}
	\int_M e^{-\frac{d(x,y)^2}{c}}\,dx 	\leq	C_1	\left(	1+	\sum_{k=1}^\infty	2^k	e^{-\frac{4^{k-1}}{c}+\sqrt{K} (2^k-1) }
	\right).
\end{equation}
Since the series
\begin{equation}
	\sum_{k=1}^\infty	2^k	e^{-\frac{4^{k-1}}{c} + \sqrt{K} (2^k-1)}
\end{equation}
converges, the claim follows.
\end{proof}

%-----------

\subsection{Gradient-Field Embedding for Non-Compact Manifolds}
\label{S:embedding}

We show that, for each $t>0$, the map $\Phi_t \colon M \to \vf_2(M)$ defined in \eqref{E:gfembedding} and given by
\begin{equation}
\Phi_t (y) = v_y = \nabla_x k_t(\cdot,y)    
\end{equation}
is an embedding for a non-compact manifold $M$ of bounded geometry, as the case $M$ compact has been dealt with in the main text. We start with an analogue of Lemma \ref{L:span}.

\begin{lemma} \label{L:span1}
For any $x\in M$ and any $t>0$,
\[
	\operatorname{span}
	\bigl\{
	\nabla_x k_t(x,y): y\in M \bigr\} = T_xM.
\]
\end{lemma}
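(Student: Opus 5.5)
We argue by contradiction, as in Lemma \ref{L:span}, and use the approximation property (v) of the heat kernel in place of the density of eigenfunction expansions. Suppose the span is a proper subspace of $T_xM$. Then there is a nonzero $v\in T_xM$ with
\[
\inner{\nabla_x k_t(x,y)}{v}_x = 0 \quad \text{for all } y\in M,
\]
that is, $d\bigl(k_t(\cdot,y)\bigr)_x(v)=0$ for every $y$. Fix $t$.

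\textbf{Step 1 (heat flows are annihilated).} Let $f\in C_0^\infty(M)$ and set $F=e^{t\Delta}f=\int_M k_t(\cdot,y)f(y)\,dvol(y)$. We differentiate under the integral sign at $x$. This is justified because $f$ has compact support and $\nabla_x k_t(x,y)$ is continuous in $(x,y)$; alternatively one can use the uniform bound \eqref{E:gradhk}. We obtain
\[
dF_x(v)=\int_M d\bigl(k_t(\cdot,y)\bigr)_x(v)\,f(y)\,dvol(y)=0.
\]
Hence $d(e^{t\Delta}f)_x(v)=0$ for all $f\in C_0^\infty(M)$.

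\textbf{Step 2 (extend to all smaller times).} Property (v) involves $t\to0^+$, but $t$ is fixed, so we need the identity at times $s\to0^+$. By the semigroup property, for $s<t$ we have $k_t(\cdot,y)=\int_M k_{t-s}(y,z)\,k_s(\cdot,z)\,dvol(z)$. This does not directly give the vanishing of $d(k_s(\cdot,z))_x(v)$. We therefore argue analytically instead.

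Consider the function $h(s,z)=d\bigl(k_s(\cdot,z)\bigr)_x(v)$. For each fixed $x$, it solves the heat equation in $(s,z)$, because $k_s(x,z)$ solves the heat equation in $z$ and differentiation in $x$ commutes with this. The hypothesis says $h(t,\cdot)\equiv0$. By backward uniqueness for the heat equation on complete manifolds with Ricci curvature bounded below, applied to solutions of Gaussian growth (guaranteed by \eqref{E:gradhk}, which holds for every time in the relevant range), we conclude $h(s,\cdot)\equiv0$ for all $s\in(0,t]$.

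Alternatively, we can mimic Lemma \ref{L:injlemma}(i). For $s<t$, the identity $h(t,\cdot)=e^{(t-s)\Delta}h(s,\cdot)$ holds in $L_2$, since $h(s,\cdot)\in L_2$ by the Gaussian bound and Lemma \ref{L:convergence}. Injectivity of $e^{(t-s)\Delta}$ on $L_2$ (from the spectral theorem) then gives $h(s,\cdot)=0$. I would use this second route, since it relies only on facts already invoked in the paper.

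\textbf{Step 3 (conclude).} By Step 2 and the argument of Step 1 applied at time $s$, we get $d(e^{s\Delta}f)_x(v)=0$ for every $s\in(0,t]$. Letting $s\to0^+$ and using the $C^\infty$ convergence in (v), we obtain $df_x(v)=0$ for every $f\in C_0^\infty(M)$. Taking $f=\eta x^j$ as in the proof of Lemma \ref{L:span} contradicts $v\ne0$.

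The main obstacle is Step 2. One must check that $h(s,\cdot)$ lies in $L_2$ and satisfies $h(t,\cdot)=e^{(t-s)\Delta}h(s,\cdot)$. This requires differentiating the semigroup identity $k_t(x,y)=\int_M k_s(x,z)\,k_{t-s}(z,y)\,dvol(z)$ in $x$ under the integral, which is justified by the gradient estimate \eqref{E:gradhk} at time $s$ together with Lemma \ref{L:convergence}.
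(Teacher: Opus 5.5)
Your proposal is correct and takes essentially the same route as the paper. You differentiate the semigroup identity to get $h(t,\cdot)=e^{(t-s)\Delta}h(s,\cdot)$, use injectivity of the heat semigroup to get $h(s,\cdot)\equiv0$ for $s\in(0,t]$, then let $s\to0^+$ in $d(e^{s\Delta}f)_x(v)=0$ and finish with the bump-function coordinate argument. The backward-uniqueness detour is unnecessary, and your $L_2$ justification via \eqref{E:gradhk} and Lemma \ref{L:convergence} supplies a detail the paper leaves implicit.
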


\begin{proof}
Suppose that the conclusion fails. Then there exist $t_0>0$, $p\in M$, and a nonzero vector $v\in T_p M$ such that
\begin{equation} \label{E:vanishing1}
	\inner{\nabla_x k_{t_0}(p,y)}{v}_p=0,
\end{equation}
$\forall y \in M$. For any $0<t\leq t_0$, let $F_t \colon M \to \real$ be given by $F_t (y)= \inner{\nabla_x k_t(p,y)}{v}_p$. By \eqref{E:vanishing1}, $F_{t_0} \equiv 0$. We claim that $F_t \equiv 0$ holds for any $0<t\leq t_0$. Indeed, differentiating the semigroup identity 
\begin{equation}
    k_{t_0} (x,y)=\int_M k_t(x,z) \,k_{t_0-t} (z,y)\, dvol(z)
\end{equation}
with respect to the first variable at $x=p$ and evaluating the differential at $v \in T_p M$, we obtain
\begin{equation}
\begin{split}
  F_{t_0} (y) &= \inner{\nabla_x k_{t_0}(p,y)}{v}_p \\
  &= \int_M  k_{t_0-t}(z,y) \inner{\nabla_x k_t(p,z)}{v}_p\, dvol(z) \\
  &= \int_M  k_{t_0-t}(y,z) F_t(z)\, dvol(z)
  = \big(e^{(t_0-t)\Delta} F_t\big) (y),
  \end{split}
\end{equation}
$\forall y \in M$. Hence, $e^{(t_0-t)\Delta} F_t = F_{t_0} \equiv 0$. Since $e^{(t_0-t)\Delta}$ is injective, it follows that $F_t\equiv 0$.

For any $f\in C_0^\infty(M)$, $e^{t\Delta}f \to f$, as $t\to 0^+$, in the $C^\infty$-topology. On the other hand, since
\begin{equation}
    e^{t\Delta} f (x) = \int_M k_t (x,y) f(y)\, dvol(y),
\end{equation}
the differentials satisfy
\begin{equation}
    d(e^{t\Delta} f)_p (v) = \int_M \inner{\nabla_x k_t(p,z)}{v}_p \,f(y)\,dvol(y) = \int_M F_t (y) f(y)\,dvol(y) = 0.
\end{equation}
Thus,
\begin{equation}
    df_p (v) = \lim_{t\to 0^+} d(e^{t\Delta} f)_p (v) =0,
\end{equation}
for any $f\in C_0^\infty(M)$. Arguing as in the proof of Lemma \ref{L:span}, one can see that this  contradicts the fact that $v\ne 0$. This completes the proof.
\end{proof}

The arguments below employ the Li-Yau bound on the heat kernel \cite[Corollary 3.1]{LY} 
\begin{equation}\label{E:liyau}
k_t(x,y) \leq \frac{c_1}{vol(B(x,\sqrt{t}))^{1/2} vol(B(y,\sqrt{t}))^{1/2}} \exp\big(-\tfrac{d_g^2(x,y)}{c_2 t}\big),
\end{equation}
and the Cheeger-Yau-type diagonal bound \cite[Theorem A.4]{LX11}
\begin{equation}\label{E:lx11}
	k_t(x,x) \geq \frac{c_3}{\exp(c_4 K t)} \,,
\end{equation}
where $c_1, c_2, c_3, c_4>0$ are constants independent of $x,y \in M$ and $t>0$, and $\Ric_M \geq -K(d-1)$, $K \geq 0$.

\begin{proposition} \label{L:immersion}
The map $\Phi_t \colon M \to \vf_2(M)$ is an immersion.
\end{proposition}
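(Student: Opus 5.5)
The plan is to compute the differential of $\Phi_t$ explicitly as a vector field and then reduce injectivity of $d(\Phi_t)_y$ to Lemma \ref{L:span1}. Fix $y\in M$ and $v\in T_yM$, and set
\[
G_v(x)\coloneqq \inner{\nabla_y k_t(x,y)}{v}_y = d\big(k_t(x,\cdot)\big)_y(v).
\]
The candidate formula is
\[
d(\Phi_t)_y(v)=\nabla_x G_v,
\]
meaning the vector field $x\mapsto \nabla_x\inner{\nabla_y k_t(x,y)}{v}_y$, obtained by differentiating $v_y(x)=\nabla_x k_t(x,y)$ in $y$ under the $\vf_2(M)$-norm.

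The first step is to justify that $\Phi_t$ is $C^1$ as a map into the Hilbert space $\vf_2(M)$ with this derivative. This needs a Gaussian bound on mixed second derivatives, uniform in $y$. I would get it from the semigroup identity
\[
k_t(x,y)=\int_M k_{t/2}(x,z)\,k_{t/2}(z,y)\,dvol(z).
\]
Differentiating under the integral gives
\[
\nabla_x\nabla_y k_t(x,y)=\int_M \nabla_x k_{t/2}(x,z)\otimes\nabla_y k_{t/2}(z,y)\,dvol(z).
\]
The Davies estimate \eqref{E:gradhk} bounds each factor by $C e^{-d_g^2(\cdot,z)/c_{t/2}}$. The triangle inequality $d_g^2(x,z)+d_g^2(z,y)\ge \tfrac12 d_g^2(x,y)$ splits off a factor $e^{-d_g^2(x,y)/(2c)}$, leaving an integral of the form $\int_M e^{-d_g^2(x,z)/(2c)}dvol(z)$ (schematically), which is bounded uniformly by Lemma \ref{L:convergence}. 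The same convolution trick, applied one more time if needed, gives local uniformity of these bounds in $y$. Dominated convergence together with Lemma \ref{L:convergence} then yields Fréchet differentiability into $\vf_2(M)$ and continuity of the derivative. I expect this regularity step to be the main technical obstacle, since the rest is soft. If the Davies bound alone is insufficient at this stage, the Li--Yau bound \eqref{E:liyau} and the volume doubling estimate \eqref{E:pdoubling} are the fallback for controlling the $z$-integral.

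Next, suppose $d(\Phi_t)_y(v)=0$, so $\nabla_x G_v=0$ almost everywhere. Since $G_v$ is smooth and $M$ is connected, $G_v\equiv c$ is constant. To show $c=0$, note that by \eqref{E:gradhk} (with the roles of $x,y$ swapped via symmetry of $k_t$) and Lemma \ref{L:convergence}, $G_v$ is integrable. Differentiating the identity $\int_M k_t(x,y)\,dvol(x)=1$ in $y$, which dominated convergence justifies with the same bound, gives $\int_M G_v\,dvol=0$. Hence $c\cdot vol(M)=0$, so $c=0$. (Alternatively, a nonzero constant cannot be integrable on the infinite-volume manifold $M$.)

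Therefore $\inner{\nabla_y k_t(x,y)}{v}_y=0$ for all $x\in M$. Using the symmetry $k_t(x,y)=k_t(y,x)$, this says $\inner{\nabla_1 k_t(y,x)}{v}_y=0$ for every $x$, where $\nabla_1$ is the gradient in the first slot evaluated at $y$. Lemma \ref{L:span1}, applied at the point $y$, states that these vectors span $T_yM$, so $v=0$. Thus $d(\Phi_t)_y$ is injective for every $y$, and $\Phi_t$ is an immersion.
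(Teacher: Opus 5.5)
Your proof is correct and follows the paper's argument: you show $G_v(x)=\inner{\nabla_y k_t(x,y)}{v}_y$ is constant by commuting the $x$- and $y$-derivatives, show the constant vanishes, and conclude with Lemma \ref{L:span1} via the symmetry of $k_t$. The differences are minor. You justify that $\Phi_t$ is differentiable into $\vf_2(M)$ with the pointwise derivative, which the paper takes for granted. You also get rid of the constant via $\int_M G_v\,dvol=0$ rather than the paper's decay $\|\nabla_y k_t(x,y)\|\to 0$ as $d_g(x,y)\to\infty$, which has the small advantage of not using non-compactness.
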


\begin{proof}
Let $y\in M$ and $v\in T_yM$. We show that if $d(\Phi_t)_y (v)=0$, then $v=0$. Since
\begin{equation}
\Phi_t(y)(x) = \nabla_x k_t(x,y),
\end{equation}
differentiating $\Phi_t$ with respect to $y$ in the direction $v$ and evaluating the resulting vector field at $x$, we obtain
\begin{equation}
d(\Phi_t)_y(v)(x) = d(\nabla_x k_t(x,\cdot))_y(v).
\end{equation}
In particular, this shows that $d(\Phi_t)_y(v)$ is a smooth vector field. Now, $d(\phi_t)_y (v)=0$ means that $d(\phi_t)_y (v)$ vanishes as an $L_2$-vector field; that is, as an element of $\vf_2 (M)$. However, smoothness ensures that it actually vanishes pointwise. In other words,
\begin{equation} \label{E:derivative}
d(\nabla_x k_t(x,\cdot))_y(v)=0,
\end{equation} 
$\forall x \in M$. For a fixed $y \in M$, define $G \colon M \to \real$ by $G(x) \coloneqq d(k_t(x,\cdot))_y(v) = \inner{\nabla_y k_t(x,y)}{v}$. Then, \eqref{E:derivative} implies that
\begin{equation}
\nabla_x G(x) = \nabla_x \big(d(k_t(x,\cdot)_y(v)\big) = d\big(\nabla_x k_t(x,\cdot)\big)_y(v) = 0,
\end{equation}
$\forall x \in M$. Here, we used that differentiation with respect to $y$ commutes with differentiation with respect to $x$. Since $M$ is connected, $G\equiv c$ for some constant $c \in \real$.

By \eqref{E:gradhk}, $\|\nabla_y k_t(x,y)\|\to 0$, as $d_g(x,y) \to \infty$ (we are assuming that $M$ is complete and non-compact so its diameter is infinite). Hence, $G(x)\to 0$, as $d_g(x,y) \to \infty$. Since $G\equiv c$, we can conclude that $c=0$; that is,
\begin{equation}
\inner{\nabla_y k_t(x,y)}{v} = 0,
\end{equation}
$\forall x \in M$.
Since, by Lemma \ref{L:span1},
\begin{equation}
\operatorname{span}\{\nabla_y k_t(x,y):x\in M\} = T_yM,
\end{equation}
it follows that $v=0$. This shows that $d(\Phi_t)_y$ is injective. Since $y\in M$ is arbitrary, $\Phi_t$ is an immersion.
\end{proof}

\begin{lemma} \label{L:convex}
	For each fixed $y \in M$, the function $f \colon (0,\infty) \to \real$, given by $f(s) \coloneqq k_s(y,y)$, is convex and
	non-increasing.
\end{lemma}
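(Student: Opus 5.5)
The plan is to use the semigroup property and symmetry to write $f(s) = k_s(y,y)$ as the squared $L_2$ norm of a heat kernel, and then differentiate. By properties (ii) and (iv) of the heat kernel,
\[
f(s) = k_s(y,y) = \int_M k_{s/2}(y,z)\,k_{s/2}(z,y)\,dvol(z) = \big\|k_{s/2}(\cdot,y)\big\|_{L_2}^2 .
\]
Write $h_\tau := k_\tau(\cdot,y) = e^{(\tau-\tau_0)\Delta}h_{\tau_0}$ for $\tau>\tau_0>0$. Then $f(s)=\|e^{(s/2-\tau_0)\Delta}h_{\tau_0}\|^2$, and I will work with the spectral calculus of the nonpositive self-adjoint operator $\Delta$. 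The Gaussian bound \eqref{E:liyau} and Lemma \ref{L:convergence} give $h_{\tau_0}\in L_2(M)$.

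Let $\nu$ be the spectral measure of $-\Delta\geq 0$ associated with $h_{\tau_0}$. Then, for $s>2\tau_0$,
\[
f(s) = \int_{[0,\infty)} e^{-\lambda(s-2\tau_0)}\,d\nu(\lambda).
\]
This is a Laplace transform of a finite positive measure, so it is a completely monotone function of $s$. It is therefore non-increasing, since $f'=-\int\lambda e^{-\lambda(\cdot)}d\nu\leq 0$. It is also convex, since $f''=\int\lambda^2e^{-\lambda(\cdot)}d\nu\geq0$. Differentiation under the integral is justified because $\lambda^k e^{-\lambda\epsilon}$ is bounded for every $\epsilon>0$. Since $\tau_0>0$ is arbitrary, these properties hold on all of $(0,\infty)$.

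An alternative, more elementary route avoids spectral measures. Compute $f'(s)=2\langle h_{s/2},\Delta h_{s/2}\rangle=-\|\nabla h_{s/2}\|^2\leq0$ and $f''(s)=\|\Delta h_{s/2}\|^2\geq0$ by integrating by parts, which needs justification on a non-compact manifold. Either way, I expect the main technical point to be the analytic justification. In the spectral route this means confirming that $k_\tau(\cdot,y)\in L_2$ and that the heat kernel coincides with the $L_2$ semigroup $e^{t\Delta}$ from the functional calculus. On a complete manifold $\Delta$ is essentially self-adjoint and the minimal heat kernel is the kernel of that semigroup, so this is standard. In the direct route one must check that $\nabla h$ and $\Delta h$ are in $L_2$ and decay fast enough, which follows from \eqref{E:gradhk} and Lemma \ref{L:convergence}. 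I would therefore present the spectral argument as the proof.
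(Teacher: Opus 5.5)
Your proof is correct and follows the same route as the paper. Both arguments write $f$ as the Laplace transform of a positive spectral measure of $-\Delta$. Your identity $k_s(y,y)=\|k_{s/2}(\cdot,y)\|_{L_2}^2$ supplies the justification for the measure $d\mu_y$ that the paper takes from the spectral theorem without comment (since $\delta_y\notin L_2$), and it already gives $k_{\tau_0}(\cdot,y)\in L_2$ on its own, so the appeal to the Li--Yau bound is unnecessary. The only other difference is cosmetic: you obtain convexity from $f''\geq 0$ by differentiating under the integral, whereas the paper uses H\"older's inequality to show that $f$ is log-convex.
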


\begin{proof}
	Since $M$ is complete, the Laplacian $\Delta$ is essentially self-adjoint
	on $C_c^\infty(M)$, so it admits a unique positive semidefinite self-adjoint extension, still denoted $\Delta$, with spectral resolution
	$\Delta = \int_0^\infty \lambda\, dE_\lambda$. By the Spectral Theorem, there exists a positive Borel measure
	$d\mu_y$ on $[0,\infty)$ such that
	\begin{equation}
		f(s) = k_s(y,y) = \int_0^\infty e^{-\lambda s}\, d\mu_y(\lambda).
	\end{equation}
In particular, $f$ is the Laplace transform of a positive measure. For $s_1,s_2>0$ and $\theta \in (0,1)$, H\"older's inequality with exponents $1/\theta$ and $1/(1-\theta)$ gives
\begin{equation}
	f(\theta s_1 + (1-\theta)s_2)
	= \int_0^\infty \big(e^{-\lambda s_1}\big)^\theta
	\big(e^{-\lambda s_2}\big)^{1-\theta} \, d\mu_y(\lambda)
	\;\le\; f(s_1)^\theta\, f(s_2)^{1-\theta},
\end{equation}
	so $f$ is log-convex. Since $f>0$ and the exponential is convex and increasing, $f = \exp(\log f)$ is convex. Monotonicity is immediate since
	$\lambda \mapsto e^{-\lambda s}$ is non-increasing in $s$ for each
	$\lambda \ge 0$ and $\mu_y \ge 0$.
\end{proof}

\begin{lemma}\label{L:lowerbound}
Let $K \geq 0$ and $\imath_0>0$ be such that $\Ric_M \geq -K(d-1)$ and $inj(M) \geq \imath_0$. For each fixed $t>0$, there exists a constant $C=C(d,K,\imath_0,t,c_1,c_3,c_4)>0$, independent of $y$, such that the vector field $v_y = \nabla_x k_t(\cdot,y) \in \vf_2 (M)$ satisfies
\begin{equation*}
\|v_y\|^2 = \|\nabla_x k_t (\cdot,y)\|^2 \geq C(d,K,i_0,t,c_1,c_3,c_4) > 0,
\end{equation*}
$\forall y \in M$, where $c_1, c_3, c_4>0$ are as in \eqref{E:liyau} and \eqref{E:lx11} and $d = dim (M)$.
\end{lemma}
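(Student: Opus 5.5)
The plan is to express $\|v_y\|^2$ as a derivative of the on-diagonal heat kernel, and then use the convexity from Lemma \ref{L:convex} to turn it into a difference of on-diagonal values. I then bound that difference from below with \eqref{E:lx11} and from above with \eqref{E:liyau}.

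\emph{Step 1 (identity).} Fix $y$ and set $f(s)=k_s(y,y)$. By the semigroup property and symmetry,
\[
\int_M k_t(x,y)^2\,dvol(x)=k_{2t}(y,y)=f(2t).
\]
Differentiating in $t$ and using the heat equation gives
\[
2f'(2t)=2\int_M k_t\,\Delta_x k_t\,dvol(x)=-2\int_M\|\nabla_x k_t(x,y)\|_x^2\,dvol(x).
\]
This uses integration by parts, which I justify on the complete manifold $M$ by a cutoff argument. The boundary terms vanish thanks to the Gaussian bounds \eqref{E:gradhk} and \eqref{E:liyau} together with Lemma \ref{L:convergence}, which also legitimize differentiating under the integral. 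Hence
\[
\|v_y\|^2=-f'(2t).
\]

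\emph{Step 2 (convexity).} By Lemma \ref{L:convex}, $f$ is convex and non-increasing. So for any $s>2t$,
\[
-f'(2t)\ \ge\ \frac{f(2t)-f(s)}{s-2t}.
\]
By \eqref{E:lx11}, $f(2t)\ge c_3e^{-2c_4Kt}$, uniformly in $y$. It therefore suffices to find $s=s(d,K,\imath_0,t,c_1,c_3,c_4)$, independent of $y$, with
\[
f(s)\le \tfrac12 c_3e^{-2c_4Kt}.
\]
That choice yields
\[
\|v_y\|^2\ \ge\ \frac{c_3e^{-2c_4Kt}}{2(s-2t)}=:C.
\]

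\emph{Step 3 (upper bound on $f(s)$; main obstacle).} By \eqref{E:liyau}, $f(s)\le c_1/vol(B(y,\sqrt s))$. So I need $vol(B(y,R))\to\infty$ as $R\to\infty$, uniformly in $y$; this is the step I expect to require the most care.

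First, since the injectivity radius is at least $\imath_0$, Croke's inequality gives $vol(B(z,r))\ge c_d r^d$ for $r\le \imath_0/2$, for every $z\in M$. Second, $M$ is complete and non-compact, so there is a unit-speed ray $\gamma$ from $y$. The points $z_j=\gamma(j\imath_0)$ satisfy $d_g(z_i,z_j)=|i-j|\imath_0$, so the balls $B(z_j,\imath_0/2)$ are pairwise disjoint. For $1\le j\le R/\imath_0-1$ they all lie in $B(y,R)$. Consequently
\[
vol(B(y,R))\ \ge\ \Big(\frac{R}{\imath_0}-1\Big)c_d(\imath_0/2)^d,
\]
a bound independent of $y$. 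Choosing $R=\sqrt s$ large enough that $c_1$ divided by this quantity is at most $\tfrac12c_3e^{-2c_4Kt}$ completes the argument.

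If one prefers to avoid Croke's inequality, the same local volume lower bound follows from Günther–Bishop comparison, using the upper bound on sectional curvature supplied by bounded geometry. (The compact case is handled separately in the main text, so only the non-compact setting of the Appendix is needed here.)
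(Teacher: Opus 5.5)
Your proposal is correct and follows the paper's proof step for step: the identity $\|v_y\|^2=-f'(2t)$, the convexity bound from Lemma \ref{L:convex}, the uniform lower bound on $f(2t)$ from \eqref{E:lx11}, and the upper bound on $f$ at large times from \eqref{E:liyau} combined with volume growth of balls that is at least linear and uniform in $y$. The only difference is that you prove this volume growth yourself, via Croke's inequality and disjoint balls of radius $\imath_0/2$ centered along a ray from $y$, whereas the paper cites Gallot--Hulin--Lafontaine; include $j=0$ so that the ball count really is at least $R/\imath_0-1$. Like the paper's estimate, your argument needs $M$ to be non-compact, which is the only setting in which the lemma is used.
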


\begin{proof}
As before, for a fixed $y\in M$, write $f(s)=k_s(y,y)$. Then, integrating by parts,
\begin{equation} \label{E:gradnorm}
\begin{split}
\|v_y\|^2 &= \int_M \inner{\nabla_x k_t(x,y)}{\nabla_x k_t(x,y)}_x \, dvol(x)    
= -\int_M k_t(x,y) \Delta k_t (x,y) \,dvol (x) \\
&= -\frac{1}{2} \partial_t \int_M k_t (x,y) k_t(x,y) \,dvol(x) 
= -\frac{1}{2} \partial_t k_{2t} (y,y) = -f'(2t).
\end{split}
\end{equation}
By Lemma~\ref{L:convex}, $f$ is convex. Thus, for any $s>1$, we have
\begin{equation} \label{E:fderivative}
f'(2t) \leq \frac{f(2st)-f(2t)}{2t(s-1)}.
\end{equation}
From \eqref{E:gradnorm} and \eqref{E:fderivative}, we obtain
\begin{equation}\label{eq:mvt-final}
\|v_y\|^2 \geq \frac{f(2t)-f(2st)}{2t(s-1)}.   
\end{equation}
By \eqref{E:lx11},
\begin{equation}
	f(2t) \geq \frac{c_3}{\exp (2 K c_4 t)} \eqqcolon a_1(K,c_3,c_4,t) > 0,
\end{equation}
a lower bound independent of $y$. To estimate $f(2st)$, we use the volume estimate 
\begin{equation}\label{eq.bounded.volume.below.1}
	r \cdot c(d,\imath_0) \leq vol( B(x;r)),
\end{equation} 
where $inj(M) \geq \imath_0>0$ and $c(d,\imath_0)>0$ is a constant independent of $x$, and , and $r\geq \imath_0$. This estimate on volumes of geodesic balls follows from \cite[Theorem 3,101]{GHL}. By \eqref{E:liyau} and  \eqref{eq.bounded.volume.below.1},
\begin{equation}
		f(2st) \leq\frac{c_1}{V(y,\sqrt{2s t})}
		\leq \frac{c_1}{c(d,\imath_0)\sqrt{2s t}} \to 0,
\end{equation}
as $s \to \infty$, uniformly in $y$. Hence, we may choose
$s_0 = s_0(d,K,\imath_0,t,c_1,c_3,c_4) > 1$, independent of $y$ and large enough so that
\begin{equation}
	\frac{c_1}{c(d,\imath_0)\sqrt{2 s_0 t}} \leq \frac{a_1(K,c_3,c_4,t)}{2}.
\end{equation}
This ensures that
\begin{equation}
f(2t) - f(2s_0 t) \geq a_1(K,c_3,c_4,t)/2,
\end{equation}
$\forall y \in M$. Substituting into \eqref{eq:mvt-final} with $s=s_0$, we obtain
\begin{equation}
	\|v_y\|^2 = \|\nabla_x k_t(\cdot,y)\|^2 \;\ge\; \frac{a_1(K,c_3,c_4,t)}{4t(s_0-1)} \;=:\; C(d,K,\imath_0,t,c_1,c_3,c_4) \;>\;0,
\end{equation}
uniformly in $y \in M$.
\end{proof}

\begin{lemma}\label{L:intestimate}
Let $(M,g)$ be a complete Riemannian manifold with bounded geometry and $a>0$. Then, there is a constant $C_1>0$ such that
\begin{equation*}
	\int_M e^{-a d_g^2(x,z)} e^{-a d_g^2(z,y)} dz < C_1 e^{-\frac{a d_g^2(x,y)}{4}}.
\end{equation*}
\end{lemma}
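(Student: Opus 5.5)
The plan is to combine the triangle inequality, to extract the factor $e^{-a d_g^2(x,y)/4}$, with Lemma \ref{L:convergence}, to control what remains of the integral uniformly in $x$ and $y$.

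\textbf{Pointwise inequality.} For any $z \in M$, the triangle inequality gives $d_g(x,y) \leq d_g(x,z) + d_g(z,y)$. By $(p+q)^2 \leq 2p^2 + 2q^2$,
\[
d_g^2(x,z) + d_g^2(z,y) \geq \tfrac{1}{2} d_g^2(x,y).
\]
Write $A = d_g^2(x,z)$ and $B = d_g^2(z,y)$. I will split the exponent symmetrically:
\[
a(A+B) = \tfrac{a}{2}(A+B) + \tfrac{a}{2}(A+B) \geq \tfrac{a}{4} d_g^2(x,y) + \tfrac{a}{2}(A+B).
\]
Exponentiating, this yields the pointwise bound
\[
e^{-aA} e^{-aB} \leq e^{-\frac{a d_g^2(x,y)}{4}}\, e^{-\frac{a}{2} d_g^2(x,z)}\, e^{-\frac{a}{2} d_g^2(z,y)}.
\]

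\textbf{Discarding a factor.} The factor $e^{-\frac a2 d_g^2(z,y)}$ is at most $1$, so I drop it.

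\textbf{Integrating in $z$.} After integrating over $z$, the inequality becomes
\[
\int_M e^{-aA}e^{-aB}\,dz \leq e^{-\frac{a d_g^2(x,y)}{4}} \int_M e^{-\frac{a}{2} d_g^2(x,z)}\,dvol(z).
\]
Lemma \ref{L:convergence}, applied with $c = 2/a$, bounds the remaining integral by a constant $C$ that does not depend on $x$. Bounded geometry enters only through this lemma, which relies on the Ricci lower bound and Bishop–Gromov.

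\textbf{Strict inequality.} Setting $C_1 := C + 1$ (or $2C$) makes the inequality strict, as stated.

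There is no substantial obstacle here. The only point needing care is the choice of split of the exponent: it must leave a Gaussian factor in $z$ that is still integrable uniformly, while producing exactly the factor $1/4$ in front of $d_g^2(x,y)$. The symmetric half–half split above achieves both.
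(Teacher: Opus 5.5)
Your proof is correct. It differs from the paper's only in how the triangle inequality is used to extract the factor $e^{-a d_g^2(x,y)/4}$.

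The paper sets $R=d_g(x,y)$ and covers $M$ by the two sets $\{z : d_g(x,z)\ge R/2\}$ and $\{z : d_g(z,y)\ge R/2\}$, whose union is $M$ by the triangle inequality. On each set it bounds one Gaussian factor by $e^{-aR^2/4}$. It then integrates the other factor over all of $M$ using Lemma~\ref{L:convergence} with $c=1/a$, which gives $C_1=2C$.

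You instead use the pointwise bound $d_g^2(x,z)+d_g^2(z,y)\ge \tfrac12 d_g^2(x,y)$ and give up half of the exponent. A single integral remains, controlled by Lemma~\ref{L:convergence} with $c=2/a$.

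Your version avoids the case split, and it is tunable. Spending a fraction $\theta\in(0,1)$ of the exponent instead of one half yields $e^{-\theta a\, d_g^2(x,y)/2}$ times $\int_M e^{-(1-\theta)a\, d_g^2(x,z)}\,dvol(z)$. This gives any rate below the sharp Euclidean rate $a/2$, whereas the domain split is tied to $a/4$. The paper's version, in exchange, keeps the full rate $a$ in the integrated Gaussian.

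For the application in Theorem~\ref{T:embed2} only decay to zero as $d_g(y_n,y_m)\to\infty$ matters, so either route suffices.
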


\begin{proof}
Abbreviate $R:=d_g(x,y)$ and define
\begin{equation}
	A \coloneqq \{z\in M  \colon d_g(x,z)\ge R/2\},
	\qquad
	B \coloneqq \{z\in M \colon d_g (z,y)\ge R/2\}.
\end{equation}
Clearly, $M=A\cup B$. Hence,
\begin{equation}
\begin{split}
	\int_M e^{-a d_g^2(x,z)} e^{-a d_g^2(z,y)}\,dvol(z) &\leq
	\int_A e^{-a d_g^2(x,z)} e^{-a d_g^2(z,y)}\,dvol(z) \\
	&+ \int_B e^{-a d_g^2 (x,z)} e^{-a d_g^2(z,y)}\,dvol(z) .
\end{split}
\end{equation}
On the subdomain $A$, since $d_g(x,z)\ge R/2$, we have $e^{-a d_g^2(x,z)}	\leq e^{-aR^2/4}$. Therefore,
\begin{equation}
	\int_A e^{-a d_g^2(x,z)} e^{-a d_g^2 (z,y)}\,dvol(z) \leq
	e^{-aR^2/4} \int_M e^{-a d_g^2(y,z)}\,dvol(z).
\end{equation}
Similarly,
\begin{equation}
	\int_B e^{-a d_g^2(x,z)} e^{-a d_g^2(z,y)}\,dvol(z) \leq
	e^{-aR^2/4} \int_M e^{-a d_g^2 (x,z)}\,dvol(z).
\end{equation}
By Lemma \ref{L:convergence}, there exists $C>0$ such that
\begin{equation}
	\int_M	e^{-a d_g^2(x,z)}\,dvol(z) < C \qquad \text{and} \qquad \int_M e^{-a d_g^2 (y,z)}\,dvol(z) <C.
\end{equation}
Taking $C_1 = 2C$, we obtain
\begin{equation}
	\int_M e^{-a d_g^2 (x,z)} e^{-a d_g^2 (z,y)}\,dvol(z) \leq
	C_1 e^{-aR^2/4} = C_1 e^{-ad_g^2(x,y)/4},
\end{equation}
as claimed.
\end{proof}

\begin{theorem} \label{T:embed2}
Let $(M,g)$ be a non-compact, connected and complete $d$-dimensional Riemannian manifold, $d \geq 1$, with bounded geometry. Then
$\Phi_t$ is an embedding for each fixed $t>0$.
\end{theorem}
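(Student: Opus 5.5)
Since $M$ is non-compact, an injective immersion need not be an embedding, so the plan is to show that $\Phi_t$ is an injective immersion which is also a homeomorphism onto its image. Injectivity is Proposition \ref{P:injectivity} and the immersion property is Proposition \ref{L:immersion}. Continuity of $\Phi_t$ follows from smoothness of the heat kernel together with the Gaussian gradient bound \eqref{E:gradhk} and Lemma \ref{L:convergence}, via dominated convergence. What remains is to show that $\Phi_t^{-1}\colon \Phi_t(M)\to M$ is continuous. Equivalently, if $\Phi_t(y_n)\to\Phi_t(y)$ in $\vf_2(M)$, then $y_n\to y$.

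\textbf{Step 1: the sequence stays bounded.} Suppose $\|v_{y_n}-v_y\|\to 0$ and, for contradiction, that a subsequence satisfies $d_g(y_n,y)\to\infty$. Expanding the square gives
\[
\|v_{y_n}-v_y\|^2=\|v_{y_n}\|^2+\|v_y\|^2-2\innervf{v_{y_n}}{v_y}.
\]
By Lemma \ref{L:lowerbound}, the first two terms are bounded below by $2C>0$ uniformly in $n$. For the cross term, apply \eqref{E:gradhk} to both factors and then Lemma \ref{L:intestimate} with $a=1/c_t$:
\[
|\innervf{v_{y_n}}{v_y}|\le C_1^2\int_M e^{-d_g^2(x,y_n)/c_t}e^{-d_g^2(x,y)/c_t}\,dvol(x)\le C' e^{-d_g^2(y_n,y)/(4c_t)}\to 0.
\]
Hence $\liminf\|v_{y_n}-v_y\|^2\ge 2C>0$, a contradiction. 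So $\{y_n\}$ lies in a bounded set, which is relatively compact because $M$ is complete (Hopf--Rinow).

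\textbf{Step 2: the limit is $y$.} Take any subsequence of $\{y_n\}$. By Step 1 it has a further subsequence converging to some $y'\in M$. Continuity of $\Phi_t$ gives $\Phi_t(y_{n_k})\to\Phi_t(y')$, and by hypothesis $\Phi_t(y_{n_k})\to\Phi_t(y)$. Injectivity then forces $y'=y$. Every subsequence thus has a further subsequence converging to $y$, so $y_n\to y$. Therefore $\Phi_t$ is a homeomorphism onto its image, and with the immersion property it is an embedding.

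\textbf{Main obstacle.} The delicate part is Step 1, namely ruling out escape to infinity. This is exactly where the uniform lower bound of Lemma \ref{L:lowerbound} and the off-diagonal decay from Lemma \ref{L:intestimate} are needed. One technical check is that the constants in \eqref{E:gradhk} are uniform in $y$, which holds under bounded geometry. A lesser point is continuity of $\Phi_t$ into $\vf_2(M)$. For this, the integrand $\|\nabla_x k_t(x,y_n)-\nabla_x k_t(x,y)\|^2$ converges to $0$ pointwise, and for $y_n$ in a compact neighbourhood of $y$ it is dominated by a fixed multiple of $e^{-d_g^2(x,y)/(2c_t)}$ (using the triangle inequality), which is integrable by Lemma \ref{L:convergence}.
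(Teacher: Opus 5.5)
Your proof is correct and rests on the same key estimate as the paper's: the uniform lower bound of Lemma \ref{L:lowerbound}, combined with the cross-term decay from \eqref{E:gradhk} and Lemma \ref{L:intestimate}, is what rules out escape to infinity. The difference is only in packaging. The paper extracts a subsequence with $d_g(y_n,y_m)\to\infty$ and shows $\{\Phi_t(y_n)\}$ is never Cauchy, which gives properness and hence a closed image. You instead compare against the fixed limit point $y$, which is slightly simpler and is exactly enough for $\Phi_t$ to be a homeomorphism onto its image. You also justify continuity of $\Phi_t$ explicitly via dominated convergence, a point the paper leaves implicit.
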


\begin{proof}
By Proposition \ref{P:injectivity} and Lemma \ref{L:immersion}, $\Phi_t$ is an injective immersion. It remains to show that $\Phi_t$ is a proper map.

Let $\{y_n: n \geq 1\}\subseteq M$ be an arbitrary sequence with $y_n\to\infty$. By this we mean that $d_g(y_n,p) \to \infty$, for some (and thus any) fixed $p \in M$. We show that $\{\Phi_t (y_n)\}$ cannot be a Cauchy sequence in $\vf_2 (M)$. Since $M$ is complete, by the Hopf-Rinow Theorem, this property is equivalent to $\Phi_t$ being proper. 

After passing to a subsequence, we may assume $d_g (y_n,y_m)\to\infty$, as $n,m\to\infty$ with $n \ne m$. By definition,	
\begin{equation}
\|\Phi_t(y_n)-\Phi_t(y_m)\|^2 = \int_M \|\nabla_x k_t(x,y_n) - \nabla_x k_t(x,y_m)\|_x^2\,dvol(x).
\end{equation}
Expanding the square yields
\begin{equation}
\begin{split}
	\|\Phi_t(y_n)-\Phi_t (y_m)\|^2
	&= \int_M \|\nabla_x k_t(x,y_n)\|_x^2\,dvol(x)
	+ \int_M \|\nabla_x k_t(x,y_m)\|_x^2\,dvol(x) \\
	&\quad
	- 2\int_M \inner{\nabla_x k_t(x,y_n)}{\nabla_x\ k_t(x,y_m)}_x \,dvol(x) \\
    &= \|\nabla_x k_t(\cdot,y_n)\|^2 + \|\nabla_x k_t(\cdot,y_m)\|^2 \\
    &\quad -2\int_M \inner{\nabla_x k_t(x,y_n)}{\nabla_x\ k_t(x,y_m)}_x \,dvol(x).
\end{split}
\end{equation}
By Lemma~\ref{L:lowerbound}, there is $C=C(d,K,i_0,t,c_1,c_3,c_4)$, independent of $n$, such that
\begin{equation} \label{E:embestimate1}
	\|\nabla_x k_t(\cdot,y_n)\|^2 \geq C > 0,
\end{equation}
On the other hand, the Li-Yau gradient estimate \eqref{E:gradhk} and Lemma~\ref{L:intestimate} imply the existence of constants  $B(d,t,K)>0$ and $b_t>0$, independent of $n$ and $m$, such that
\begin{equation} \label{E:embestimate2}
\int_M \inner{\nabla_x k_t(x,y_n)}{\nabla_x k_t(x,y_m)}\,dvol(x)\leq B(d,t,K) \,e^{-\frac{d_g^2(y_n,y_m)}{b_t}} \longrightarrow 0,
\end{equation}
as $n,m\to\infty$ with $n \ne m$, since $d_g(y_n,y_m)\to\infty$. Combining estimates \eqref{E:embestimate1} and \eqref{E:embestimate2},
\begin{equation}
\liminf_{\substack{\\ n,m\to\infty \\ n \ne m}} \|\Phi_t(y_n)-\Phi_t(y_m)\|^2 \geq 2C >0.
\end{equation}
Therefore, $\{\Phi_t(y_n)\}$ cannot be a Cauchy sequence. This implies that the image under $\Phi_t$ of any sequence $y_n\to\infty$ has no convergent subsequence. Therefore, $\Phi_t$ is proper, as claimed.
Being an injective and proper immersion, $\Phi_t$ is an embedding.
\end{proof}

\end{document}